\newtheorem{theorem}{Theorem}[section]
\newtheorem{proposition}{Proposition}[section]
\newtheorem{lemma}{Lemma}[section]
\newtheorem{corollary}{Corollary}[section]
\newtheorem{definition}{Definition}[section]
\newcommand{\R}{\mathbb{R}}
\newcommand{\N}{\mathbb{N}}
\newcommand{\p}{\partial}
\newcommand{\bb}{\begin{equation}}
\newcommand{\ee}{\end{equation}}
\newcommand{\ba}{\begin{array}}
\newcommand{\ea}{\end{array}}
\newcommand{\f}{\frac}
\newcommand{\ds}{\displaystyle}
\newcommand{\al}{\alpha}
\newcommand{\be}{\beta}
\newcommand{\sign}{\text{sgn}\,}
\numberwithin{equation}{subsection}
\def\supp{\text{supp}\,}
\title{Persistence and asymptotic analysis of solutions of nonlinear wave equations}
\author{
Igor~Leite~Freire$^{1,2}$\thanks{igor.leite.freire@gmail.com and igor.freire@ufscar.br}\\
$^1$ Institute of Advanced Studies,\\
Loughborough University\\
 LE11 3TU Epinal Way \\
Loughborough, United Kingdom,\\
$^2$Departamento de Matemática,\\
Universidade Federal de São Carlos,\\
Rodovia Washington Luís, Km 235, 13565-905,\\
São Carlos, SP - Brasil
}
\begin{document}
\maketitle

\begin{abstract}
We consider persistence properties of solutions for a generalised wave equation including vibration in elastic rods and shallow water models, such as the BBM, the Dai's, the Camassa-Holm, and the Dullin-Gottwald-Holm equations, as well as some recent shallow water equations with the Coriolis effect. We establish unique continuation results and exhibit asymptotic profiles for the solutions of the general class considered. From these results we prove the non-existence of non-trivial spatially compactly supported solutions for the equation. As an aftermath, we study the equations earlier mentioned in light of our results for the general class.
\end{abstract}

{\bf MSC classification 2010:} 35A01, 35Q35.

\keywords{Generalised hyperelastic rod equation \and Shallow water models \and Conserved quantities \and Persistence of decay rates}

\textbf{Dedicatory:} This paper is dedicated to Professor Antonio Carlos Gilli Martins, who was an example of teacher, inspiration as a professional, and beloved friend. Rest in peace.

\newpage
\section{Introduction}\label{sec1}

In \cite{dai-acta} Dai deduced the following non-linear wave equation
\bb\label{1.0.1}
u_\tau+\sigma_1 u_{\tau\xi\xi}+\zeta_1 uu_\xi=-\sigma_2(2u_\xi u_{\xi\xi}+uu_{\xi\xi\xi}),
\ee
for describing finite length and amplitude waves propagating in Mooney-Rivlin materials, which encloses some polymeric elastomers, such as properly treated nature rubber, see \cite{dai-acta}. Above, $\tau=\epsilon t' $ and $\xi=x'-t'$, where $\epsilon$ is a small parameter; $t'$ and $x'$ denote dimensionless time and space variables; whereas $\sigma_1$ and $\sigma_2$ are negative parameters related to the material. 

Travelling waves for Dai's equation were later investigated in \cite{dai-wave,dai-proc} and, in particular, in \cite{dai-wave} it was shown that \eqref{1.0.1} has peakon solutions, which is a quite remarkable sort of solutions popularised after the famous work by Camassa and Holm \cite{chprl}, where the dispersive equation
\bb\label{1.0.2}
u_t-u_{txx}+\kappa u_x+3uu_x=2u_xu_{xx}+uu_{xxx},\quad \kappa\in\R,
\ee
named after them and referred henceforth as CH equation for short, was deduced using Hamiltonian methods in the study of shallow water regime. Peakons arise as weak solutions of \eqref{1.0.2} whenever $\kappa=0$.

Despite being deduced in rather different physical contexts (see the Introduction of \cite{dai-wave} for a nice discussion), mathematically speaking both equations \eqref{1.0.1} and \eqref{1.0.2} (even more when \eqref{1.0.2} is restricted to $\kappa=0$) are quite similar, and it would be expected that they share several mathematical properties. Very often Dai's equation is written as \bb\label{1.0.3}
u_t-u_{txx}+3uu_x=\gamma(2u_xu_{xx}+uu_{xxx}),
\ee
for some constant $\gamma$, see \cite{bran-cmp,guo-siam}, that can be obtained from \eqref{1.0.1} under the change of coordinates 
$$\tau=3\f{\sqrt{-\sigma_2}}{\zeta_1}t,\quad \xi=\sqrt{-\sigma_2}x.$$

While existence, uniqueness, and wave breaking of solutions for the CH equation can be found in the papers by Constantin and Escher \cite{const1998-1,const1998-2,const1998-3}, Constantin \cite{const2000-1}, and Rodriguez-Blanco \cite{blanco}, their counter-parts for the Dai's equation were reported by Brandolese \cite{bran-cmp}, Brandolese and Cortez \cite{bran-jfa}, and Guo and Zhou \cite{guo-siam}.

In \cite{guo-siam} the authors also considered persistence properties for Dai's equation by applying to it the ideas introduced by Himonas {\it et. al.} \cite{him-cmp} to tackle similar problem for the CH equation. As a consequence of these results, \eqref{1.0.3} cannot have compactly supported solutions at two different times. This fact was first noticed for the CH equation by Constantin \cite{const-jmp} and soon after Henry \cite{henry-jnmp} proved the same fact in a different way. 

Over the years, generalisations and extensions of both Dai's and CH equations have been proposed, some of them purely from a mathematical point of view, while others have been derived based on physical arguments. To name a few, we can mention:
\begin{itemize}
    \item the hyperelastic-rod wave like equation
    \bb\label{1.0.4}
    u_t-u_{txx}+\p_x\f{g(u)}{2}=\gamma(2u_xu_{xx}+uu_{xxx}),
    \ee
    introduced in \cite{co-siam};
    \item the {\it generalised rCH equation}, 
    \bb\label{1.0.5}
    u_t-u_{txx}+3uu_x=2u_xu_{xx}+uu_{xxx}+\al u_x+\be u^2u_x+\gamma u^3u_x+\Gamma u_{xxx},
    \ee
    considered in \cite{raspajde,raspasapm} (and \cite{freire-jde-2020-1} for its dissipative form). Such an equation was proposed as a mathematical generalisation of some physically derived models describing waves under the Coriolis effect proposed in \cite{chen-advances}, see also \cite{gui-jmfm,gui-jnl,chines-jde}.
    
\end{itemize}

One of our main interest and motivation is just \eqref{1.0.5}. Although it has received considerable attention\footnote{By considerable attention we do not mean just the equation studied in \cite{raspajde,raspasapm}, but actually those in \cite{chen-advances,gui-jmfm,gui-jnl,chines-jde}, which is enclosed in \eqref{1.0.5}}, apparently, to not say surprisingly, qualitative properties related with asymptotic behavior of its solutions seem not to have been considered yet. The same observation can also be extended to \eqref{1.0.4}.

The purpose of the present paper is to enlighten the aforementioned points for the models above. However, instead of treating them separately, we analyse the generalised hyperelastic-rod wave equation
    \bb\label{1.0.7}
    u_t-u_{txx}+\p_x\Big(f(u)+g(u)+\f{1}{2}f''(u)u_x^2\Big)-\p_x^3f(u)=0,
    \ee
which, as far as the author knows, was first considered by Holden and Raynauld \cite{holden-jde}, and latter studied by Brandolese and Cortez \cite{bran-jde}; and Tian, Yan, and Zhang in \cite{tian-nach}. Equation \eqref{1.0.7} clearly encloses all of the mentioned equations, among others. Moreover, by considering \eqref{1.0.7} we also complement, from a different perspective, the works \cite{holden-jde,bran-jde,tian-nach}.

In the next section we present the main notions and concepts needed for the paper, as well as our main results, the outline of the manuscript, its novelty and challenges. In section \ref{sec3} we revisit useful results needed to our demonstrations and certain technical, but very useful, propositions are proved. The machinery of section \ref{sec3} will be then used in section \ref{sec4} to demonstrate our main theorems, that are stated in section \ref{sec2}. We also apply our main results to the BBM, Dai's, DGH, and rCH equations in section \ref{sec5}. Our discussions are given in section \ref{sec6}, while our conclusions are made in section \ref{sec7}.

\section{Notation, important concepts, and main results}\label{sec2}

The $L^p(\R)$, $1\leq p\leq\infty$, and Sobolev spaces $H^s(\R)$, whose norms will be respectively denoted by $\|\cdot\|_{p}$ and $\|\cdot\|_{H^s(\R)}$, are the fundamental spaces of functions used henceforth. For any two functions $f$ and $g$, their convolution is denoted by $f\ast g$. 

For the CH equation and similar models, the variables $t$ and $x$ denote time and space, so that we maintain the same meaning for them throughout the manuscript. The derivatives of a function $u=u(t,x)$ with respect to the first variable is referred to as $u_t$, whereas $u_x$ or $\p_x u$ denote the derivatives with respect $x$. Also, whenever $I$ is an interval in $\R$ and $X$ is a Banach space, we write $u\in C^0(I;X)$ to say that $u(t,\cdot)\in X$ and $\|u\|=\sup\limits_{t\in X}\|u(t,\cdot)\|_X$, where $\|u(t,\cdot)\|_X$ denotes the norm of the function $x\mapsto u(t,x)$. In addition, $u\in C^k(I;X)$, for a natural number $k$, means that its derivatives with respect to $t$ belongs to $C^0(I;X)$ up to order $k$.

The solutions of \eqref{1.0.7} herein belong to $C^0([0,T];H^s(\R))$, $s>3/2$, for some $T>0$. For them we can invert the Helmholtz operator $1-\p_x^2$ and write \eqref{1.0.7} as a first order non-linear (and with a non-local term) evolution equation, obtaining
\bb\label{2.0.1}
u_t+f'(u)u_x+\p_x\Lambda^{-2}\Big(g(u)+\f{f''(u)}{2}u_x^2\Big)=0.
\ee

Given a function $h$ (for which the operations are defined), $\Lambda^{-2}h=p\ast h$ and 
$\p_x\Lambda^{-2}h=\p_x p\ast h$, where 
\bb\label{2.0.2}
p(x)=\f{e^{-|x|}}{2}
\ee
and $\p_x p(x)=-\sign{(x)}p(x)$, respectively. The last derivative has to be considered in the distributional sense.

A careful look at \eqref{2.0.1} tell us that the functions $f$ and $g$ cannot be arbitrary. It is then time to list what we request from them henceforward. Our conditions are
\begin{enumerate}
    \item[${\bf H}_1$] $f,g\in C^\infty(\R)$;
    \item[${\bf H}_2$] $f(0)=0$ and $f''(x)\geq0$, $x\in\R$;
    \item[${\bf H}_3$] $g(x)\geq 0$ and $g(x)=0$ if and only if $x=0$.
\end{enumerate}

Our first result is:
\begin{theorem}\label{teo2.1}
Let $u\in C^0([0,T];H^s(\R))$, $s>3/2$, be a solution for \eqref{2.0.1}. For each $t\in[0,T]$ fixed, we define
\bb\label{2.0.3}
h_t(x)=g(u(t,x))+\f{f''(u(t,x))}{2}u_x(t,x)^2,
\ee
and
\bb\label{2.0.4}
F_t(x)=(\p_x\Lambda^{-2} h_t)(x).
\ee

Moreover, assume that the conditions ${\bf H}_1$--${\bf H}_3$ hold. If we can find $t^\ast$, $a$, and $b$ such that $\{t^\ast\}\times[a,b]\subseteq(0,T)\times\R$, $h_{t^\ast}\big|_{(a,b)}\equiv0$, and $F_{t^\ast}(a)=F_{t^\ast}(b)$, then $u\equiv0$.
\end{theorem}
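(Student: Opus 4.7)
The plan is to exploit the sign structure of $h_t$ built into hypotheses ${\bf H}_2$ and ${\bf H}_3$: since $f''\geq 0$ and $g\geq 0$, the function $h_{t^\ast}$ is pointwise non-negative. The assumption $h_{t^\ast}\big|_{(a,b)}\equiv 0$ forces both summands to vanish on $(a,b)$; in particular $g(u(t^\ast,x))=0$ on $(a,b)$, and because ${\bf H}_3$ says $g$ vanishes only at the origin, this gives $u(t^\ast,\cdot)\equiv 0$ on $(a,b)$. The whole game is then to upgrade this local vanishing to $u(t^\ast,\cdot)\equiv 0$ on all of $\R$, and from there to global vanishing in $t$.

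Next I would unfold the convolution representation. Using \eqref{2.0.2} and $\partial_x p(x)=-\sign(x)\,p(x)$, write
\begin{equation*}
F_{t^\ast}(x)=-\tfrac{e^{-x}}{2}\int_{-\infty}^{x} e^{y}\,h_{t^\ast}(y)\,dy+\tfrac{e^{x}}{2}\int_{x}^{\infty} e^{-y}\,h_{t^\ast}(y)\,dy.
\end{equation*}
Setting $A:=\int_{-\infty}^{a} e^{y}\,h_{t^\ast}(y)\,dy\geq 0$ and $B:=\int_{b}^{\infty} e^{-y}\,h_{t^\ast}(y)\,dy\geq 0$, the fact that $h_{t^\ast}$ vanishes on $(a,b)$ lets me evaluate
\begin{equation*}
F_{t^\ast}(a)=-\tfrac{e^{-a}}{2}A+\tfrac{e^{a}}{2}B,\qquad F_{t^\ast}(b)=-\tfrac{e^{-b}}{2}A+\tfrac{e^{b}}{2}B.
\end{equation*}
Equating these and rearranging gives the identity $(e^{-a}-e^{-b})A+(e^{b}-e^{a})B=0$. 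Because $a<b$, the two coefficients $e^{-a}-e^{-b}$ and $e^{b}-e^{a}$ are both strictly positive, while $A,B\geq 0$. Hence $A=B=0$. Since $h_{t^\ast}$ is continuous and non-negative with $e^{\pm y}$ positive, this forces $h_{t^\ast}\equiv 0$ on $(-\infty,a]\cup[b,\infty)$, and combined with the hypothesis on $(a,b)$ we obtain $h_{t^\ast}\equiv 0$ on $\R$. Appealing once more to ${\bf H}_3$ delivers $u(t^\ast,\cdot)\equiv 0$.

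Finally, to conclude $u\equiv 0$ on $[0,T]\times\R$, I would invoke the well-posedness theory for \eqref{2.0.1} in $H^s(\R)$, $s>3/2$: the zero function is the unique solution whose datum at the time slice $t=t^\ast$ vanishes identically, so forward and backward uniqueness from $t^\ast$ (which for this class of Camassa--Holm-type equations is a standard consequence of the Kato-type semigroup/quasilinear framework used to prove local existence) yields the claim. This last step is where I expect to need to quote results from the papers referenced in the introduction, and it is the one place where I would have to be careful: backward uniqueness is not automatic and must be justified either via the transport structure of \eqref{2.0.1} along the flow of $f'(u)$ or by a direct energy estimate for the difference with the zero solution. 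The sign-based squeezing that produced $A=B=0$, by contrast, is the genuinely short step and is the heart of the proof.
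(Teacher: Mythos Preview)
Your sign-squeezing argument is correct and reaches the same intermediate conclusion as the paper, namely $h_{t^\ast}\equiv 0$ on all of $\R$, but the route differs slightly. Where you evaluate the convolution formula for $F_{t^\ast}$ at the two endpoints and solve the resulting $2\times 2$ linear system in $A,B$, the paper instead uses the operator identity $\partial_x^2\Lambda^{-2}=\Lambda^{-2}-1$ to write $F'_{t^\ast}(x)=\Lambda^{-2}h_{t^\ast}(x)-h_{t^\ast}(x)$, observes that on $(a,b)$ this reduces to $F'_{t^\ast}=p\ast h_{t^\ast}\geq 0$, and then integrates to get $\int_a^b (p\ast h_{t^\ast})(x)\,dx=F_{t^\ast}(b)-F_{t^\ast}(a)=0$; non-negativity forces $p\ast h_{t^\ast}=0$ on $(a,b)$, and strict positivity of $p$ then gives $h_{t^\ast}\equiv 0$. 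Your computation and the paper's are two packagings of the same positivity argument; yours is slightly more explicit, the paper's slightly more conceptual.

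The more substantive divergence is in the final step. You propose to pass from $u(t^\ast,\cdot)\equiv 0$ to $u\equiv 0$ via forward and backward uniqueness, and you rightly flag backward uniqueness as the delicate point. The paper bypasses this entirely by invoking the conserved quantity ${\cal H}(t)=\|u(t,\cdot)\|_{H^1(\R)}^2$ of \eqref{2.0.10}: once $u(t^\ast,\cdot)=0$ one has ${\cal H}(t^\ast)=0$, hence ${\cal H}(t)=0$ for all $t\in[0,T]$, and therefore $u\equiv 0$. This is both shorter and avoids any appeal to the well-posedness machinery beyond what is already assumed. Your approach can be made to work (the time-reversed equation has the same structure with $(f,g)\mapsto(-f,-g)$, so the same local theory applies), but the conservation-law argument is the cleaner one and worth knowing.
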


We observe that if $u$ is as described in theorem \ref{teo2.1}, then $u$ is a solution of \eqref{2.0.1} subject to the initial datum $u_0(x):=u(0,x)$. Thus \cite[Theorem 3.1]{tian-nach} implies the existence of a (local) unique solution $u\in C^0([0,T^\ast);H^s(\R))\cap C^1([0,T^\ast);H^{s-1}(\R))$, $s>3/2$, for some $T^\ast>0$. In particular, $0<T<T^\ast$.

It does not matter in theorem \ref{teo2.1} whether the solution is local or not. What is really relevant is the existence of a solution. As a short comment, even the question of uniqueness is somewhat unimportant for that result. 

It is worth mentioning that we restricted the first independent variable to a compact set $[0,T]$, which could be replaced by $[0,T)$ and the result would still be true. We, however, opted to maintain $[0,T]$ because some of our coming main results must necessarily have the restriction of $t$ to compact sets.

Theorem \ref{teo2.1} is a unique continuation result, which can be better explored in its consequences.

\begin{corollary}\label{cor2.1}
If ${\bf H}_1$--${\bf H}_3$ hold, $u\in C^0([0,T];H^s(\R))$, $s>3/2$, is a solution of \eqref{2.0.1}, and there exists a non-empty open set $\Omega\subseteq[0,T]$ for which $u$ vanishes, then $u$ is trivial.
\end{corollary}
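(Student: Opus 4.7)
My plan is to deduce this corollary as a direct consequence of Theorem \ref{teo2.1}. The essential observation is that if $u$ is identically zero on a set large enough in time, then the pointwise quantity $h_{t}$ vanishes identically in $x$ for an appropriate time slice, and this already supplies both hypotheses required by the theorem.

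Concretely, I would argue as follows. Since $\Omega$ is open and non-empty in $[0,T]$ (with the subspace topology), it cannot reduce to $\{0\}$ or $\{T\}$, so there exists $t^{*}\in\Omega\cap(0,T)$. By assumption $u(t^{*},x)=0$ for every $x\in\R$, and consequently $u_{x}(t^{*},x)=0$. Using ${\bf H}_{3}$ (which gives $g(0)=0$) and ${\bf H}_{2}$ (which ensures $f''(0)$ is finite), the definition \eqref{2.0.3} yields
\begin{equation*}
h_{t^{*}}(x)=g(u(t^{*},x))+\f{f''(u(t^{*},x))}{2}\,u_{x}(t^{*},x)^{2}=g(0)+\f{f''(0)}{2}\cdot 0=0,
\end{equation*}
for every $x\in\R$. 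In particular, $h_{t^{*}}$ vanishes on any interval $(a,b)\subseteq\R$.

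Because $h_{t^{*}}\equiv 0$ on all of $\R$, the convolution definition \eqref{2.0.4} gives $F_{t^{*}}(x)=(\p_{x}p\ast h_{t^{*}})(x)=0$ for every $x\in\R$, so $F_{t^{*}}(a)=F_{t^{*}}(b)=0$ holds trivially for any pair $a<b$. Choosing, for instance, $a=-1$ and $b=1$ we have $\{t^{*}\}\times[a,b]\subseteq(0,T)\times\R$, and all three hypotheses of Theorem \ref{teo2.1} are fulfilled. The theorem then forces $u\equiv 0$, proving the corollary.

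There is no real obstacle here: the corollary is essentially a repackaging of Theorem \ref{teo2.1}, and the only technical point worth checking is that $\Omega$ being open (not merely non-empty) guarantees a usable interior time $t^{*}\in(0,T)$ so that Theorem \ref{teo2.1} applies directly, without having to invoke a separate uniqueness argument for the Cauchy problem.
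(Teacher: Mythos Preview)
Your argument is internally correct for the statement exactly as printed, but there is a discrepancy you should be aware of: in the paper the set $\Omega$ is meant to be an open subset of $[0,T]\times\R$, not of $[0,T]$. This is clear both from the paper's own proof of the corollary (which chooses $t^\ast,a,b$ with $\{t^\ast\}\times[a,b]\subseteq\Omega$) and from the way Corollary~\ref{cor2.3} invokes Corollary~\ref{cor2.1} (a compactly supported $u$ on $[0,T]\times\R$ vanishes on an open set of $[0,T]\times\R$, not on a time interval). The ``$\Omega\subseteq[0,T]$'' in the statement is a typographical slip.

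Under the intended reading your key step breaks down: from $u\big|_\Omega\equiv0$ you only get $u(t^\ast,x)=u_x(t^\ast,x)=0$ for $x\in[a,b]$, so $h_{t^\ast}$ vanishes on $(a,b)$ but not a priori on all of $\R$, and hence you cannot conclude $F_{t^\ast}=\p_xp\ast h_{t^\ast}\equiv0$ by convolution with zero. The paper bypasses this by reading $F_t$ off the equation itself,
\[
F_t(x)=-(u_t(t,x)+f'(u(t,x))u_x(t,x)),
\]
so that on any space--time open set where $u$ vanishes one has $u_t=u_x=0$ and therefore $F_{t^\ast}(a)=F_{t^\ast}(b)=0$ directly; together with $h_{t^\ast}\big|_{(a,b)}\equiv0$ this feeds into Theorem~\ref{teo2.1}. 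If you keep your reading of $\Omega\subseteq[0,T]$, then indeed $u(t^\ast,\cdot)\equiv0$ for some $t^\ast$, and your argument (or simply the $H^1$ conservation \eqref{2.0.10}, or Theorem~\ref{teo4.1}(c)) finishes the job---but that is a strictly weaker corollary than the one the paper proves and later uses.
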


The adjective trivial above refers to any function vanishing everywhere.

\begin{corollary}\label{cor2.2}
If ${\bf H}_1$--${\bf H}_3$ hold and $u\in C^0([0,T];H^s(\R))$, $s>3/2$, is a non-trivial solution of \eqref{2.0.1}, then we cannot find $t^\ast\in (0,T)$ and $[a,b]\subseteq\R$ such that $u(t^\ast,x)=0$, $x\in[a,b]$, and $u_t(t^\ast,a)=u_t(t^\ast,b)$.
\end{corollary}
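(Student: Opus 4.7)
The plan is to argue by contradiction, reducing the hypotheses of Corollary \ref{cor2.2} to those of Theorem \ref{teo2.1}. Suppose $u$ is non-trivial yet there exist $t^{\ast}\in(0,T)$ and $[a,b]\subseteq\R$ with $u(t^{\ast},x)=0$ on $[a,b]$ and $u_{t}(t^{\ast},a)=u_{t}(t^{\ast},b)$. Since $u(t,\cdot)\in H^{s}(\R)$ with $s>3/2$, Sobolev embedding gives $u(t^{\ast},\cdot)\in C^{1}(\R)$, so $u_{x}(t^{\ast},\cdot)$ is well-defined pointwise; and the local existence result recalled after Theorem \ref{teo2.1} yields $u_{t}(t^{\ast},\cdot)\in H^{s-1}(\R)\hookrightarrow C^{0}_{b}(\R)$, giving pointwise sense to the boundary values at $a$ and $b$.

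Next I would verify that $h_{t^{\ast}}$ vanishes on $(a,b)$. Since $u(t^{\ast},\cdot)\equiv 0$ on $[a,b]$, hypothesis ${\bf H}_{3}$ (together with $g(0)=0$) yields $g(u(t^{\ast},x))=0$ there; differentiating the identically zero function, $u_{x}(t^{\ast},x)=0$ on $(a,b)$, and by continuity of $u_{x}(t^{\ast},\cdot)$ this extends to the closed interval $[a,b]$. Therefore both summands defining $h_{t^{\ast}}$ in \eqref{2.0.3} vanish on $(a,b)$.

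The key step is to convert the condition $u_{t}(t^{\ast},a)=u_{t}(t^{\ast},b)$ into $F_{t^{\ast}}(a)=F_{t^{\ast}}(b)$. For this I would use the evolution equation \eqref{2.0.1} rewritten as
\begin{equation*}
u_{t}(t,x)=-f'(u(t,x))\,u_{x}(t,x)-F_{t}(x).
\end{equation*}
Evaluating at $t=t^{\ast}$ and $x=a$ (resp.\ $x=b$), and using $u(t^{\ast},a)=u(t^{\ast},b)=0$ together with the vanishing of $u_{x}(t^{\ast},\cdot)$ at the endpoints established above, the advective term drops out, so $u_{t}(t^{\ast},a)=-F_{t^{\ast}}(a)$ and $u_{t}(t^{\ast},b)=-F_{t^{\ast}}(b)$. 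The assumed equality of these two time derivatives then forces $F_{t^{\ast}}(a)=F_{t^{\ast}}(b)$.

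With $h_{t^{\ast}}\equiv 0$ on $(a,b)$ and $F_{t^{\ast}}(a)=F_{t^{\ast}}(b)$ in hand, Theorem \ref{teo2.1} applies and yields $u\equiv 0$, contradicting the standing assumption of non-triviality. The only delicate point in the argument is the pointwise interpretation of $u_{x}(t^{\ast},\cdot)$ and $u_{t}(t^{\ast},\cdot)$ at the endpoints $a,b$; once this is justified by the embedding $H^{s-1}(\R)\hookrightarrow C^{0}_{b}(\R)$ for $s>3/2$, the rest of the proof is a direct reduction to Theorem \ref{teo2.1}.
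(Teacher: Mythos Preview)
Your proof is correct and follows the same approach as the paper: reduce to Theorem~\ref{teo2.1} by showing $h_{t^\ast}\equiv 0$ on $(a,b)$ from $u(t^\ast,\cdot)\equiv 0$ there, and converting $u_t(t^\ast,a)=u_t(t^\ast,b)$ into $F_{t^\ast}(a)=F_{t^\ast}(b)$ via the identity \eqref{4.1.1}. Your version is more explicit than the paper's about the regularity needed to make pointwise sense of $u_x$ and $u_t$ at the endpoints, but the argument is otherwise identical.
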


\begin{corollary}\label{cor2.3}
If ${\bf H}_1$--${\bf H}_3$ hold, then none non-trivial solutions of the equation \eqref{2.0.1}
$u\in C^0([0,T^\ast];H^s(\R)),$ $s>3/2$, can be compactly supported on $[0,T]\times\R$.
\end{corollary}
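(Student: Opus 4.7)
The plan is to argue by contradiction and reduce directly to Corollary \ref{cor2.2}. Suppose, for the sake of contradiction, that $u\in C^0([0,T^\ast];H^s(\R))$, $s>3/2$, is a non-trivial solution of \eqref{2.0.1} whose support, viewed as a subset of $[0,T]\times\R$, is compact. Then there exists $R>0$ such that $u(t,x)=0$ for every $(t,x)\in[0,T]\times\R$ with $|x|>R$.

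First, I would observe that for every fixed $x_0$ with $|x_0|>R$, the map $t\mapsto u(t,x_0)$ is identically zero on $[0,T]$. Since \eqref{2.0.1} rewrites $u_t$ in terms of $u$, $u_x$ and the nonlocal term $F_t$, one obtains $u_t\in C^0([0,T];H^{s-1}(\R))$, and the Sobolev embedding $H^{s-1}(\R)\hookrightarrow C^0(\R)$ (which holds because $s-1>1/2$) guarantees that $u_t(t,\cdot)$ makes sense pointwise. Differentiating the identity $u(t,x_0)\equiv 0$ with respect to $t$, I then get $u_t(t,x_0)=0$ for every $t\in[0,T]$ and every $|x_0|>R$.

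Next, I would select any $t^\ast\in(0,T)$ and any pair $a,b$ with $R<a<b$ (for concreteness, $a=R+1$ and $b=R+2$). By construction, $u(t^\ast,x)=0$ for every $x\in[a,b]$ and, by the previous step, $u_t(t^\ast,a)=u_t(t^\ast,b)=0$. These are precisely the conditions that Corollary \ref{cor2.2} rules out whenever $u$ is non-trivial, which gives the desired contradiction and forces $u\equiv 0$.

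I do not expect any substantive obstacle in this argument, since the real work has already been absorbed into Theorem \ref{teo2.1} and its corollaries. The only small point one has to check is the regularity justifying the pointwise evaluation of $u_t$; this follows immediately from the equation together with the Sobolev embedding indicated above. Equivalently, one could bypass Corollary \ref{cor2.2} and apply Theorem \ref{teo2.1} directly: on $(a,b)\subset(R,\infty)$ one has $h_{t^\ast}\equiv 0$, while the equation forces $F_{t^\ast}(x)=-u_t(t^\ast,x)=0$ throughout $(R,\infty)$, so $F_{t^\ast}(a)=F_{t^\ast}(b)$ is automatic.
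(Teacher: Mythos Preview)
Your argument is correct. The paper's own proof is even shorter: it simply observes that a compactly supported $u$ would vanish on a non-empty open set $\Omega\subseteq[0,T]\times\R$ and then invokes Corollary~\ref{cor2.1} directly, which already packages the passage through $u_t$ and $F_t$ that you carry out by hand. Your route through Corollary~\ref{cor2.2} (or, in your final remark, directly through Theorem~\ref{teo2.1}) is logically equivalent and equally valid; the only extra ingredient you need is the pointwise meaning of $u_t$, which you justify correctly via the equation and the Sobolev embedding, whereas the paper's use of Corollary~\ref{cor2.1} hides this step inside that corollary's proof.
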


Although corollary \ref{cor2.3} is a result of non-existence of compactly supported solutions for \eqref{2.0.1}, the existing literature suggests that it could be improved. In fact, for the CH equation is well known that given a solution $u$ in $C^0([0,T];H^s(\R))$, $s>3/2$, then it can be spatially compactly supported at most for a single value of $t$, see \cite{const-jmp,henry-jnmp,him-cmp,bran-imrn}.
In view of this fact we have our next theorem.

\begin{theorem}\label{teo2.2}
If ${\bf H}_1-{\bf H}_3$ hold, $f'(0)=0$, and let $u\in C^0([0,T];H^s(\R))$, $s>3/2$, be a solution of \eqref{2.0.1}. Then $u$ cannot be (spatially) compactly supported at two different times.
\end{theorem}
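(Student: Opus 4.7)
I argue by contradiction: suppose $u\not\equiv 0$ is spatially compactly supported at two distinct times $t_1<t_2$ in $[0,T]$. The goal is to derive $u\equiv 0$ on $[t_1,t_2]$, from which uniqueness of the Cauchy problem for \eqref{2.0.1} forces $u\equiv 0$ throughout $[0,T]$, the desired contradiction. Since $u(t_1,\cdot)$ is compactly supported, it decays like $e^{-|x|}$ at infinity; invoking the persistence of pointwise exponential decay proved earlier in the paper yields a constant $C>0$ with $|u(t,x)|+|u_x(t,x)|\le C e^{-|x|}$ uniformly on $[t_1,t_2]\times\R$. Combining this with $f'(0)=0$ and ${\bf H}_2$ (so $f(0)=f'(0)=0$) together with ${\bf H}_3$ (which forces $g(0)=g'(0)=0$, as $g\ge 0$ attains its global minimum at $0$), Taylor expansion gives $f(u)$, $g(u)$ and hence $h_t$ all of size $O(e^{-2|x|})$, uniformly in $t\in[t_1,t_2]$.

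Next I would fix $x$ larger than the right endpoints of the supports of $u(t_1,\cdot)$ and $u(t_2,\cdot)$, and integrate the equation $u_t=-\p_x f(u)-\p_x(p\ast h_t)$ from $t_1$ to $t_2$. Since $u(t_1,x)=u(t_2,x)=0$, one obtains $\int_{t_1}^{t_2}\left[\p_x f(u(s,x))+\p_x(p\ast h_s)(x)\right]\,ds=0$. Using the explicit kernel representation
\begin{equation*}
\p_x(p\ast h_s)(x)=-\tfrac{1}{2}e^{-x}\int_{-\infty}^xe^y h_s(y)\,dy+\tfrac{1}{2}e^x\int_x^\infty e^{-y}h_s(y)\,dy
\end{equation*}
and the decay bound on $h_s$, for large $x$ one reduces this to
\begin{equation*}
\p_x(p\ast h_s)(x)=-\tfrac{1}{2}e^{-x}L_+(s)+O(e^{-2x}),\qquad L_+(s):=\int_\R e^y h_s(y)\,dy,
\end{equation*}
while $\p_x f(u(s,x))=f'(u)u_x=O(e^{-2x})$, thanks to $f'(0)=0$ and the decay of both $u$ and $u_x$. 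Substituting, multiplying by $2e^x$ and passing $x\to\infty$ yields $\int_{t_1}^{t_2}L_+(s)\,ds=0$.

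Because $h_s\ge 0$ by ${\bf H}_2$--${\bf H}_3$, the function $s\mapsto L_+(s)$ is non-negative and continuous, hence $L_+\equiv 0$ on $[t_1,t_2]$. This forces $h_s\equiv 0$ pointwise and, by ${\bf H}_3$, $u(s,\cdot)\equiv 0$ for every $s\in[t_1,t_2]$, closing the argument. I expect the main obstacle to be the persistence estimate propagating $|u|+|u_x|\lesssim e^{-|x|}$ uniformly over the time slab; without it the asymptotic expansions above cannot be controlled. The hypothesis $f'(0)=0$ is used precisely to make the local transport term $f'(u)u_x$ one order smaller than the $e^{-x}$ leading behaviour of $\p_x(p\ast h_t)$, so that the non-local contribution is what dictates the limit as $x\to\infty$.
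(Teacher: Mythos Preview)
Your argument is correct and follows the same underlying strategy as the paper's: integrate the equation in time between the two instants of compact support, isolate the leading $e^{-x}$ contribution of the nonlocal term $\p_x(p\ast h_t)$ for large $x$, and use the non-negativity of $h_t$ (guaranteed by ${\bf H}_2$--${\bf H}_3$) to force the time-integrated coefficient to vanish. The paper, however, routes this through the intermediate Theorem~\ref{teo2.3}, which packages the expansion $u(t,x)=u_0(x)\pm t\,e^{-|x|}(\Phi_\pm(t)+o(1))$ under the weaker decay hypothesis~\eqref{2.0.6}; there the controlling weight is only $e^{|x|/2}(1+|x|)^{1/2}(\ln(1+|x|))^d$, so the transport remainder $f'(u)u_x$ is $e^{-|x|}\cdot o(1)$ rather than $O(e^{-2|x|})$. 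Your route is more direct: since $u(t_1,\cdot)$ is compactly supported, Theorem~\ref{teo2.5} with $\phi(x)=e^{|x|}$, $v(x)=e^{|x|}$ and $p=\infty$ (note $e^{-|\cdot|}v(\cdot)\equiv 1\in L^\infty$, and \eqref{2.0.5} follows from ${\bf H}_1$, ${\bf H}_3$ because $g$ is smooth with a global minimum at $0$, hence $g(u)\le c\,u^2$ for bounded $u$) yields the stronger uniform bound $|u|+|u_x|\le Ce^{-|x|}$, making all error terms genuinely $O(e^{-2|x|})$. What you give up is the asymptotic profile~\eqref{2.0.8} for non-compactly-supported data, which has independent interest; what you gain is a shorter, self-contained proof of the present theorem. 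One small caveat: be explicit that the persistence step invokes Theorem~\ref{teo2.5} rather than Theorem~\ref{teo2.4} (the weight $e^{|x|}$ is \emph{not} admissible in the sense of Definition~\ref{def2.1}, since $e^{-|\cdot|}v(\cdot)\notin L^1$), applied after translating time so that $t_1$ becomes the initial instant.
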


Our next result is a key ingredient for establishing the precedent theorem, since it describes the behavior of solutions $u$ of \eqref{2.0.1} with very strong decaying as $|x|\rightarrow\infty$. 

\begin{theorem}\label{teo2.3}
Assume that ${\bf H}_1-{\bf H}_3$ hold, 
\bb\label{2.0.5}
g(u(t,x))\leq cu(t,x)^2,
\ee
for some $c>0$, $f'(0)=0$, $d>1/2$, $s>3/2$, and let $u_0\in H^s(\R)$ be a non-trivial function satisfying
\bb\label{2.0.6}
\sup_{x\in\R}\Big(e^{\f{|x|}{2}}(1+|x|)^{1/2}(\ln{(1+|x|)})^d(|u_0(x)|+|u_0'(x)|)\Big)<\infty.
\ee
Suppose that $u=u(t,x)$ is the (unique) corresponding solution of \eqref{2.0.1} subject to $u(0,x)=u_0(x)$, and \eqref{2.0.5}. Then there exists a constant $K>0$ (depending only on $f$, $g$ and $T$) such that
\bb\label{2.0.7}
\sup_{x\in\R}\Big(e^{\f{|x|}{2}}(1+|x|)^{1/2}(\ln{(1+|x|)})^d(|u(t,x)|+|\p_xu(t,x)|)\Big)\leq K.
\ee

Moreover, there exists continuous functions $\Phi_\pm:[0,T]\rightarrow\R$, $\epsilon_\pm:[0,T]\times\R\rightarrow\R$, and $R:\R\rightarrow\R$, such that
\bb\label{2.0.8}
u(t,x)=u_0(x)\pm\, t\,e^{-|x|}\Big(\Phi_\pm(t)+\epsilon_\pm(t,x)+R(x)\Big),
\ee
where the sign $+$ is taken for $x>0$, whereas for $x<0$ we choose $-$. Moreover,
\begin{itemize}
    \item $\Phi_\pm$ is non-negative and $\Phi_\pm(t_0)=0$ if and only if $u(t_0,x)=0$;
    \item $\epsilon_\pm(t,x)\rightarrow0$ as $\pm x\rightarrow\infty$;
    \item $\Phi_\pm$ is uniformly bounded, that is, $c_1\leq \Phi_\pm(t)\leq c_2$, for some non-negative constants $c_1$ and $c_2$;
    \item $R(x)\sim O((\ln{(1+|x|)})^{-d}(1+|x|)^{-1})$.
\end{itemize}
\end{theorem}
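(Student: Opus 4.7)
The plan splits the proof into two distinct steps: first propagate the weighted pointwise bound \eqref{2.0.7} from the initial datum \eqref{2.0.6}, and then, with that decay in hand, integrate \eqref{2.0.1} in time to extract the profile \eqref{2.0.8}. Set $\phi(x):=e^{|x|/2}(1+|x|)^{1/2}(\ln(1+|x|))^d$, so that \eqref{2.0.6} reads $\phi\cdot(|u_0|+|u_0'|)\in L^\infty(\R)$ and the target of the first step is $\phi\cdot(|u|+|u_x|)\in L^\infty([0,T]\times\R)$.

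The engine of persistence is the one-sided convolution estimate
\[
\sup_{x\in\R}\ \phi(x)\!\int_\R p(x-y)\,\phi(y)^{-1}\,dy \ \le\ C,
\]
which I would verify by splitting the integral into the near-diagonal region $|y-x|\le 1$ (where $\phi(y)\asymp\phi(x)$) and the far region (where the exponential in $p$ dominates). Since $|\partial_x p|=p$, the same bound gives $\phi(x)\bigl|\partial_x\Lambda^{-2}(\phi^{-1}w)(x)\bigr|\le C\|w\|_\infty$ for any $w\in L^\infty$. Next, I would work along the flow $\dot q(t)=f'(u(t,q(t)))$, which is bi-Lipschitz in $x$ with constants depending only on $T$ and $\|u\|_{C^0([0,T];H^s)}$, so that $\phi(q(t,x))$ and $\phi(x)$ are comparable on $[0,T]$. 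Along these characteristics,
\[
\frac{d}{dt}u(t,q(t)) = -\partial_x\Lambda^{-2}h,\qquad
\frac{d}{dt}u_x(t,q(t)) = -f''(u)u_x^2 + h - \Lambda^{-2}h,
\]
the second identity using $\partial_x^2\Lambda^{-2} = \Lambda^{-2}-I$. The bound $g(u)\le c u^2$ and $f\in C^\infty$ give $\phi\cdot h \le C(\|\phi u\|_\infty + \|\phi u_x\|_\infty)$, after which the convolution estimate closes a Gronwall inequality for $M(t):=\|\phi u(t,\cdot)\|_\infty+\|\phi u_x(t,\cdot)\|_\infty$, yielding \eqref{2.0.7}.

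With \eqref{2.0.7} in hand, integrate \eqref{2.0.1} in time to obtain $u(t,x)-u_0(x)=-\int_0^t\bigl(f'(u)u_x+\partial_x p\ast h\bigr)(s,x)\,ds$, and for $x>0$ split
\[
\partial_x p\ast h(x) = -\tfrac12 e^{-x}\!\int_{-\infty}^x\! e^y h(y)\,dy + \tfrac12 e^x\!\int_x^\infty\! e^{-y} h(y)\,dy.
\]
The decay \eqref{2.0.7} gives $h(y)=O\bigl(e^{-|y|}(1+|y|)^{-1}(\ln(1+|y|))^{-2d}\bigr)$, and $d>1/2$ is exactly what renders $\int_\R e^y h(y)\,dy$ absolutely convergent at $+\infty$. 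Hence $\partial_x p\ast h(x) = -\tfrac12 e^{-x}\int_\R e^y h(y)\,dy + e^{-x}\cdot o_{x\to+\infty}(1)$. Because $f'(0)=0$, $f'(u)u_x=O(u\,u_x)=O\bigl(e^{-|x|}(1+|x|)^{-1}(\ln(1+|x|))^{-2d}\bigr)$, so this term only contributes to the residual. Collecting and comparing with \eqref{2.0.8} identifies, for $x>0$,
\[
t\,\Phi_+(t) = \tfrac12\int_0^t\!\!\int_\R e^y h_s(y)\,dy\,ds,
\]
and the remaining $e^{-|x|}\cdot o(1)$ correction decomposes into a $(t,x)$-dependent $\epsilon_+(t,x)$ vanishing as $x\to+\infty$ and a time-independent remainder $R(x)$ of the claimed rate; the case $x<0$ is symmetric with $e^{-y}$ in place of $e^y$.

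The stated properties of $\Phi_\pm$ then follow quickly: non-negativity from $h\ge 0$; uniform boundedness from \eqref{2.0.7} combined with integrability of $e^{\pm y}h(y)$; and the vanishing criterion, because $\Phi_+(t_0)=0$ together with $h\ge 0$ forces $h_s\equiv 0$ on $\R$ for a.e.\ $s\in[0,t_0]$, whence $g(u(s,\cdot))\equiv 0$ and thus (by \textbf{H}$_3$) $u\equiv 0$ on $[0,t_0]\times\R$, in particular at $t_0$, while the converse is Corollary \ref{cor2.1}. The principal technical obstacle is the convolution estimate for this precise weight: the exponential factor $e^{|x|/2}$ sits at the critical rate set by $p(x)=\tfrac12 e^{-|x|}$, the polynomial $(1+|x|)^{1/2}$ is at its own borderline, and the logarithmic correction $(\ln(1+|x|))^d$ is not sub-multiplicative, so the bound has to be produced by direct near-diagonal analysis rather than from a product inequality; the threshold $d>1/2$ plays a dual role, appearing both in that estimate and (independently) in the integrability of $e^y h(y)$ driving the profile.
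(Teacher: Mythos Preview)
Your second step---integrating \eqref{2.0.1} in time, splitting $\partial_x p\ast h$ into the two one-sided exponentials, and identifying $\Phi_\pm$ with the time-averaged integrals $\tfrac{1}{2t}\int_0^t\int_\R e^{\pm y}h_s(y)\,dy\,ds$---is exactly what the paper does in subsection~\ref{sec4.4} (the paper packages the time average as the function $\sigma(t,y)$ of Theorem~\ref{teo4.1} and calls the result $\lambda_\pm$, but the content is identical). Your treatment of the $f'(u)u_x$ term via $f'(0)=0$ and the resulting rate for $R$ also match \eqref{4.4.8}--\eqref{4.4.9}.

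Where you diverge is in the persistence step \eqref{2.0.7}. The paper does not argue along characteristics; instead it obtains \eqref{2.0.7} as the $p=\infty$ case of Theorem~\ref{teo2.5}, whose proof runs through the truncated weights $\phi_N=\min\{\phi,N\}$ (Proposition~\ref{prop3.5}), the weighted Young estimate of Lemma~\ref{lema3.2}, and the differential inequalities of Theorems~\ref{teo4.2}--\ref{teo4.5}, finally letting $N\to\infty$ and then $p\to\infty$. Your route is more direct for the pure $L^\infty$ target: the characteristic flow absorbs the transport term cleanly and the single kernel bound $\sup_x\phi(x)\int p(x-y)\phi(y)^{-1}dy<\infty$ (which is essentially the $v$-moderate condition with $e^{-|\cdot|}v\in L^1$) replaces the full Young machinery. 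The paper's approach, on the other hand, produces the entire family of $L^p$ estimates \eqref{2.0.12}--\eqref{2.0.16} at once and sidesteps the a~priori finiteness question (whether $\|\phi u(t,\cdot)\|_\infty<\infty$ before Gr\"onwall) through the $\phi_N$ truncation; you should say a word about this bootstrap, e.g.\ by running your argument first with $\phi_N$.

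Two small corrections. First, the weight as written vanishes at $0$ (since $\ln 1=0$), so $\phi^{-1}$ is not locally integrable and your convolution bound is ill-posed there; replace $\ln(1+|x|)$ by $\ln(e+|x|)$ (or add a harmless constant) before running the estimate. Second, the ``converse'' of the vanishing criterion is not Corollary~\ref{cor2.1} (which concerns open sets in $[0,T]\times\R$, and $\{t_0\}\times\R$ is not open): if $u(t_0,\cdot)\equiv 0$ then the invariance of \eqref{2.0.10} forces $u\equiv 0$, hence $h\equiv 0$ and $\Phi_\pm\equiv 0$; this is the content of Theorem~\ref{teo4.1}(c).
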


We recall that $f_1(x)\sim O(f_2(x))$ as $x\nnearrow\infty$ if there exists some $L$ such that
$$
\lim_{x\rightarrow\infty}\f{f_1(x)}{f_2(x)}=L,
$$
whereas $f_1(x)\sim o(f_2(x))$ as $x\nnearrow\infty$ means that
$$
\lim_{x\rightarrow\infty}\f{f_1(x)}{f_2(x)}=0.
$$

A similar definition can be done for $x\ssearrow-\infty$.

We have two more theorems to complete our main results' list, but before stating them we need a couple of concepts. To tackle our problems we use ideas introduced by Brandolese \cite{bran-imrn}, so that it is convenient to recall some important notions.

A function $f:[a,b]\rightarrow\R$ is called absolutely continuous on $[a,b]$ if, for every $\varepsilon>0$, there exists $\delta>0$ such that
$$
\sum_{i=1}^N |f(b_i)-f(a_i)|<\varepsilon,
$$
for any finite collection of non-overlapping compact subsets $[a_i,b_i]\subseteq[a,b]$, $1\leq i\leq N$, with 
$$\sum_{i=1}^N(b_i-a_i)<\delta.$$

The collection of such functions is denoted by $AC[a,b]$. Moreover, a function $f:\R\rightarrow\R$ is said to be locally absolutely continuous if $f\in AC[a,b]$, for all $-\infty<a<b<\infty$.

Henceforth, it will always be assumed that by a weight (or weight function) $v:\R\rightarrow\R$ we mean a continuous and positive function. Additional conditions will be imposed in due course.

A weight function $v$ is called {\it sub-multiplicative} if $v(x+y)\leq v(x)v(y)$, for all $x,y\in\R$. Note that a necessary condition for a weight function $v$ to be sub-multiplicative is $v(0)\geq 1$.

A function $\phi:\R\rightarrow\R$ is said to be $v-$moderate if $\phi$ is positive and there exists a constant $c_0>0$ such that
\bb\label{2.0.9}
\phi(x+y)\leq c_0\,v(x)\,\phi(y).
\ee
In particular, we have $c_0\geq 1/v(0)$. If a function $\phi$ is $v-$moderate, for some function $v$, we simply say that $\phi$ is moderate.

\begin{definition}\label{def2.1}
An admissible weight function for the equation \eqref{2.0.1} is a locally absolutely continuous function $\phi:\R\rightarrow\R$, such that $|\phi'(x)|\leq A |\phi(x)|$ almost everywhere (a.e.), for some $A>0$, and $v-$moderate for some continuous, sub-multiplicative weight function $v$ satisfying $\inf_\R v>0$ and $e^{-|\cdot|}v(\cdot)\in L^1(\R)$.
\end{definition}

We observe that if $u\in C^0([0,T];H^s(\R))$, $s>3/2$, is a solution of \eqref{2.0.1}, then the quantity
\bb\label{2.0.10}
{\cal H}(t)=\int_\R(u(t,x)^2+u_x(t,x)^2)dx=\|u(t,\cdot)\|^2_{H^1(\R)}
\ee
is invariant, that is, for any $t\in(0,T]$, ${\cal H}(t)={\cal H}(0)=\|u_0\|^2_{H^1(\R)}$. Therefore, the Sobolev Embedding Theorem tells us that $\|u(t,\cdot)\|_\infty\leq \|u_0\|_{H^1(\R)}$, whereas the local well posedness result \cite[Theorem 3.1]{tian-nach} implies that $u_x(t,\cdot)\in L^\infty(\R)\cap C^0(\R)$. Hence, $\sup\limits_{t\in[0,T]}\|u_x(t,\cdot)\|_\infty$ is uniformly bounded, and from the conditions on $f$ we infer that the quantities
$$
\sup_{t\in[0,T]}\|f(u(t,\cdot))\|_\infty,\quad \sup_{t\in[0,T]}\|f' (u(t,\cdot))\|_\infty,\quad \sup_{t\in[0,T]}\|f''(u(t,\cdot))\|_\infty
$$
are equally uniformly bounded. As a result, we can find a constant $M>0$, independent of $t$, but eventually depending on $T$, $f$, and $\|u_0\|_{H^1(\R)}$, such that
\bb\label{2.0.11}
\ba{lcl}
M&=&\ds{\sup_{t\in[0,T]}\|u(t,\cdot)\|_\infty+\sup_{t\in[0,T]}\|u_x(t,\cdot)\|_\infty+\sup_{t\in[0,T]}\|f(u(t,\cdot))\|_\infty}\\
\\
&+&\ds{ \sup_{t\in[0,T]}\|f' (u(t,\cdot))\|_\infty+ \sup_{t\in[0,T]}\|f''(u(t,\cdot))\|_\infty}.
\ea
\ee

\begin{theorem}\label{teo2.4}
Assume that ${\bf H_1}-{\bf H_3}$ hold. Let $T>0$, $s>3/2$, and $u\in C^0([0,T];H^s(\R))$ be a solution of \eqref{2.0.1} subject to $u(0,x)=u_0(x)$. If $\phi$ is an admissible weight function for \eqref{2.0.1} (see definition \ref{def2.1}) and the initial datum satisfies the conditions
$$\phi u_0,\,\phi u_0'\,\in L^{p}(\R),\,\,2\leq p\leq\infty,$$
then
\bb\label{2.0.12}
\|\phi(\cdot)u(t,\cdot)\|_p+\|\phi(\cdot)u_x(t,\cdot)\|_p\leq \kappa\Big(\|\phi(\cdot)u_0(\cdot)\|_p+\|\phi(\cdot)u_0'(\cdot)\|_p\Big),
\ee
for some constant $\kappa>0$ depending on $T$, $A$, $c_0$, $\inf_\R v$, $\|e^{-|\cdot|}v(\cdot)\|_{1}$, $f$, $g$, and $M$.
\end{theorem}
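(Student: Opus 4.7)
The plan is to adapt Brandolese's weighted $L^p$ framework \cite{bran-imrn} to \eqref{2.0.1}. Setting $w(t,x):=\phi(x)u(t,x)$ and $W(t,x):=\phi(x)u_x(t,x)$, I would derive a coupled pair of transport-type equations for $(w,W)$ and close a Gronwall inequality on $\|w(t,\cdot)\|_p+\|W(t,\cdot)\|_p$; the uniform pointwise control of $u$, $u_x$, $f'(u)$, $f''(u)$ furnished by \eqref{2.0.11} is used tacitly throughout.

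First, rewrite \eqref{2.0.1} as $u_t+f'(u)u_x=-\partial_x p*h$ with $h:=g(u)+\tfrac{1}{2}f''(u)u_x^2$, differentiate once in $x$ (using $\partial_x^2 p=p-\delta$) to obtain the companion equation $\partial_t u_x+f'(u)u_{xx}=-f''(u)u_x^2+h-p*h$, and multiply by $\phi$. Invoking the a.e.\ bound $|\phi'|\le A|\phi|$ from Definition \ref{def2.1} to exchange $\phi u_x$ with $w_x-\phi' u$ (and similarly $\phi u_{xx}$ with $W_x-(\phi'/\phi)W$), one arrives at transport equations for $w$ and $W$ with drift $f'(u)$ whose source terms are controlled pointwise by $|w|$ and $|W|$, together with the convolutions $\phi\,p*h$ and $\phi\,\partial_x p*h$. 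Here I use $|g(u)|\le C_g|u|$ on $\{|u|\le M\}$ (valid by Taylor since $g(0)=0$) to secure $|\phi h|\le C_g|w|+\tfrac{M^2}{2}|W|$.

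For $2\le p<\infty$, multiplying these equations by $|w|^{p-2}w$ and $|W|^{p-2}W$ and integrating, the transport contributions collapse after integration by parts to terms of the form $-\tfrac{1}{p}\int f''(u)u_x|w|^p\,dx$, bounded by $\tfrac{M^2}{p}\|w\|_p^p$ (similarly for $W$). After dividing by $\|w\|_p^{p-1}$ and $\|W\|_p^{p-1}$ and summing, one obtains
\begin{equation*}
\frac{d}{dt}(\|w\|_p+\|W\|_p)\le C_1(\|w\|_p+\|W\|_p) + \|\phi\,p*h\|_p + \|\phi\,\partial_x p*h\|_p.
\end{equation*}
The decisive step is the convolution estimate: the $v$-moderateness $\phi(x)\le c_0 v(x-y)\phi(y)$ and $|\partial_x p|\le p$ give, for any $q$ with $\phi q\in L^p$,
\begin{equation*}
\phi(x)|(p*q)(x)|\le c_0\int p(x-y)v(x-y)\phi(y)|q(y)|\,dy = c_0\,(pv)*(\phi|q|)(x),
\end{equation*}
so Young's inequality yields $\|\phi(p*q)\|_p,\,\|\phi(\partial_x p*q)\|_p\le \tfrac{c_0}{2}\|e^{-|\cdot|}v\|_1\|\phi q\|_p$. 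Applied with $q=h$ together with the pointwise bound on $|\phi h|$, this gives $\|\phi\,p*h\|_p+\|\phi\,\partial_x p*h\|_p\le C_2(\|w\|_p+\|W\|_p)$. Substituting and applying Gronwall produces \eqref{2.0.12} with $\kappa=e^{C'T}$, where $C'$ depends on the advertised parameters \emph{independently} of $p$.

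The main obstacle is the rigorous justification of the above manipulations: a priori $u(t,\cdot)\in H^s$ does not guarantee $\|w(t,\cdot)\|_p<\infty$, let alone classical differentiability in $t$. The standard remedy, following \cite{bran-imrn,him-cmp}, is to replace $\phi$ by the bounded admissible weight $\phi_N:=\min\{\phi,N\}$, which retains the same $A$ and is $v$-moderate with constant $c_0':=\max\{c_0,(\inf_{\R}v)^{-1}\}$ (this is where the hypothesis $\inf v>0$ enters), carry out the argument for $\phi_N$ where every quantity is finite and sufficiently regular, and let $N\to\infty$ by monotone convergence. The endpoint $p=\infty$ follows by letting $p\to\infty$ in the resulting finite-$p$ estimate, which is legitimate because all constants are uniform in $p$; alternatively, it can be treated directly by integrating $w$ along the characteristic curves of the drift $f'(u)$, where the transport term disappears through the chain rule.
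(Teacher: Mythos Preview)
Your proposal is correct and follows essentially the same approach as the paper: both adapt Brandolese's weighted $L^p$ framework by truncating to $\phi_N=\min\{\phi,N\}$, performing energy estimates via multiplication by $|\phi_N u|^{p-2}\phi_N u$ (resp.\ $|\phi_N u_x|^{p-2}\phi_N u_x$), controlling the nonlocal term through the weighted Young inequality $\|\phi_N(p*h)\|_p\le c_0\|pv\|_1\|\phi_N h\|_p$, bounding $|\phi_N h|$ using $|g(u)|\le C|u|$ and $|f''(u)u_x|\le M^2$, closing with Gronwall, and then letting $N\to\infty$ and $p\to\infty$ with constants independent of both. The only organisational difference is that the paper packages the intermediate estimates as separate theorems (Theorems~\ref{teo4.2}--\ref{teo4.5}) on $\phi_N$ from the outset, whereas you sketch the argument for $\phi$ first and defer the truncation; your alternative characteristic-curve treatment of $p=\infty$ is not in the paper but is a standard option.
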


Finally, we have
\begin{theorem}\label{teo2.5}
Assume that ${\bf H_1}-{\bf H_3}$ hold. Let $2\leq p\leq \infty$, $u_0\in H^s(\R)$, $s>3/2$, $\phi$ a $v-$moderate weight function such that $u_0$ and $\phi$ satisfy
\bb\label{2.0.13}
\phi^{1/2}u_0,\,\,\phi^{1/2}u_0' \in L^2(\R),
\ee
and
\bb\label{2.0.14}
\phi u_0,\,\,\phi u_0'\in L^p(\R).
\ee

Suppose that $e^{-|\cdot|}v(\cdot)\in L^p(\R)$, $2\leq p\leq \infty$, $g$ satisfies the condition \eqref{2.0.5}, and let $u$ be the corresponding solution of \eqref{2.0.1} subject to $u(0,x)=u_0(x)$. Then there exists a constant $K$, depending on $\phi$, $v$, $f$, $g$, $T$, and $u_0$ (and its $L^p(\R)-$norm), such that
\bb\label{2.0.15}
\sup_{t\in[0,T]}\|\phi(\cdot)^{1/2}(\cdot)u(t,\cdot)\|_2+\|\phi(\cdot)^{1/2}u_x(t,\cdot)\|_2\leq K,
\ee
for some positive constant $K$, and
\bb\label{2.0.16}
\sup_{t\in[0,T]}\|\phi(\cdot)u(t,\cdot)\|_p+\|\phi(\cdot)u_x(t,\cdot)\|_p< \infty.
\ee
\end{theorem}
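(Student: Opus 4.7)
The plan is to establish \eqref{2.0.15} first and then bootstrap to \eqref{2.0.16}, exploiting a coupling produced by the quadratic structure of the source $h$ in the nonlocal part of \eqref{2.0.1}. Throughout I write $K=\partial_x p$ so that $\partial_x\Lambda^{-2}h=K\ast h$, and I use the uniform bound $M$ from \eqref{2.0.11}.

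For \eqref{2.0.15}, I would run a weighted energy estimate on
$$y(t):=\|\phi^{1/2}u(t,\cdot)\|_2^2+\|\phi^{1/2}u_x(t,\cdot)\|_2^2.$$
Multiplying \eqref{2.0.1} by $\phi u$, and its $x$-derivative (where $\partial_x^2\Lambda^{-2}=-I+\Lambda^{-2}$) by $\phi u_x$, then integrating in $x$, the local terms are bounded by a multiple of $y(t)$ via Cauchy--Schwarz, \eqref{2.0.11}, and \eqref{2.0.5}. For the convolutions $K\ast h$ and $\Lambda^{-2}h$, the $v^{1/2}$-moderateness of $\phi^{1/2}$, inherited from that of $\phi$, gives $\phi^{1/2}(x)\leq c_0^{1/2}\,v(x-y)^{1/2}\,\phi^{1/2}(y)$, and Young's inequality then yields
$$\|\phi^{1/2}(K\ast h)\|_2\leq C\,\|\phi^{1/2}h\|_2,$$
with $C$ depending on $c_0$ and $\int e^{-|x|}v(x)^{1/2}\,dx$. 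A short H\"older interpolation shows that $e^{-|\cdot|}v^{1/2}\in L^1(\R)$ whenever $e^{-|\cdot|}v\in L^p(\R)$ for any $p\in[1,\infty]$, so this constant is finite. Since $\|\phi^{1/2}h\|_2^2\leq C'\,y(t)$ by the pointwise bounds on $u$, $u_x$, $f''(u)$ together with $g(u)\leq cu^2$, one obtains $y'(t)\leq C''\,y(t)$, and Gr\"onwall's inequality delivers \eqref{2.0.15}.

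For \eqref{2.0.16}, I would multiply \eqref{2.0.1} by $\phi^{p}|u|^{p-2}u$ and its $x$-derivative by $\phi^{p}|u_x|^{p-2}u_x$, integrate, and bound the local contributions by H\"older in terms of $\|\phi u\|_p$ and $\|\phi u_x\|_p$ (the case $p=\infty$ is handled directly through the mild formulation of \eqref{2.0.1}). For the nonlocal term the $v$-moderateness of $\phi$ gives
$$\phi(x)\,|(K\ast h)(x)|\leq c_0\,\bigl((|K|v)\ast\phi|h|\bigr)(x),$$
and Young's inequality with exponents $r=p$, $s=1$ (so that $1/r+1/s=1+1/p$) yields
$$\|\phi(K\ast h)\|_p\leq c_0\,\|e^{-|\cdot|}v\|_p\,\|\phi h\|_1.$$
The key observation is that $\|\phi h\|_1\leq D\bigl(\|\phi^{1/2}u\|_2^2+\|\phi^{1/2}u_x\|_2^2\bigr)$ by \eqref{2.0.5} and \eqref{2.0.11}, which is uniformly bounded on $[0,T]$ by the already proven \eqref{2.0.15}. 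A parallel bound for $\|\phi u_x\|_p$ arises from the same scheme applied to the differentiated equation, and Gr\"onwall's inequality then produces \eqref{2.0.16}.

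The principal obstacle is that the energy argument requires integration by parts against the term $f'(u)u_{xx}$, which formally produces $\phi'$ --- a derivative not guaranteed by $v$-moderateness alone. I would resolve this by approximating $\phi$ with admissible weights $\phi_n$ (via truncation and mollification) that preserve the $v$-moderate constant $c_0$, running the estimates uniformly in $n$, and then passing to the limit through monotone or dominated convergence. Structurally, the critical point is the coupling between \eqref{2.0.15} and \eqref{2.0.16} through $\|\phi h\|_1$: it is precisely the hypothesis $g(u)\leq cu^2$ of \eqref{2.0.5} that renders this coupling tight.
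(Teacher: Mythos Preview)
Your proposal is correct and follows essentially the same strategy as the paper: first establish the $\phi^{1/2}$-weighted $L^2$ bound \eqref{2.0.15} by observing that $\phi^{1/2}$ is $v^{1/2}$-moderate with $e^{-|\cdot|}v^{1/2}\in L^1(\R)$ (the paper does this in Theorem~\ref{teo4.6}, reducing to Theorem~\ref{teo2.4}), and then bootstrap to \eqref{2.0.16} via the key coupling $\|\phi h_t\|_1\leq C\bigl(\|\phi^{1/2}u\|_2^2+\|\phi^{1/2}u_x\|_2^2\bigr)$, which is exactly the content of Theorem~\ref{teo4.7}. The only cosmetic differences are that the paper uses the explicit truncation $\phi_N=\min\{\phi,N\}$ rather than a general approximation scheme, and it reaches $p=\infty$ by letting $p\to\infty$ in the finite-$p$ estimates rather than through the mild formulation.
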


Theorems \ref{teo2.4} and \ref{teo2.5} have results that may appear similar at a first glance, but a more careful look shows substantial difference. Indeed, the weight $\phi$ in theorem \ref{teo2.4} is $v-$moderate by a function $v$ such that $e^{-|\cdot|}v(\cdot)\in L^1(\R)$ (see definition \ref{def2.1}), whereas the weight $\phi$ in theorem \ref{teo2.5} is $v-$moderate by a function $v$ satisfying $e^{-|\cdot|}v(\cdot)\in L^p(\R)$, for any $p\geq2$.

{\bf Challenges and novelties of the manuscript.} The fact that we are considering \eqref{1.0.7}, or its non-local evolution form \eqref{2.0.1}, which has two arbitrary functions, brings some difficulties in view of the natural arbitrariness involved. For this reason, it is not surprising that we should somewhat restrict these functions in order to be able to address the problem. 

All of our results require the conditions ${\bf H}_1$ and ${\bf H}_2$ fulfilled, and most of them also require the additional restriction ${\bf H}_3$. The first two conditions are not new, in the sense that similar requirements were already used in earlier works dealing with problems related to existence of solutions, see \cite[Equation (1.6)]{holden-jde}, \cite[Theorem 2.1]{bran-jde}, or \cite[Theorem 5.1]{tian-nach}. What seems to be new in our case is just ${\bf H}_3$, which requires that $g$ is non-negative and the only solution of the equation $g(x)=0$ is just $x=0$. The latter condition cannot be relaxed since some of our demonstrations are based on the following idea: find a value of $t$, say $t^\ast$, for which $h_{t^\ast}(x)=0$, where $h_{t^\ast}(\cdot)$ is given by \eqref{2.0.3}. Under our conditions we can guarantee that $u(t^\ast,x)=0$, otherwise theorem \ref{teo2.1} and its corollaries could not be proved. 

The three aforesaid conditions are somewhat unsurprising, but the same cannot be said about the inequality \eqref{2.0.5} and the restriction $f'(0)=0$ in theorems \ref{teo2.4} and \ref{teo2.5}, which seem to be truly new conditions when dealing with \eqref{1.0.7}.

In the proof of theorem \ref{teo4.7} (see section \ref{sec4}) the condition $g(u)\leq c|u|^2$, for some positive constant $c$, is sine qua non. Such a theorem is an essential ingredient for the demonstration of theorem \ref{teo2.5} (see subsection \ref{sec4.3}), which is crucial to establish theorems \ref{teo2.3} and \ref{teo2.2}, see subsection \ref{sec4.4}. In lieu of \eqref{2.0.5} we could require $|g(x)|\leq c x^2$, which is somewhat stronger than the original requirement because this would be a condition on the function $g$, whereas \eqref{2.0.5} is, in fact, a condition on the composed function $g\circ u$. Moreover, the sort of solutions $u$ we are dealing with is bounded in view of the Sobolev Embedding Theorem, which makes easier to have \eqref{2.0.5} satisfied for a wider class of equations. 

As an example, let us consider the function
$$g(x)=\f{x^2}{2}+\f{\be}{3}x^3+\f{\gamma}{4}x^4,$$
with $\be>0$ and $\gamma\geq0$. It does not satisfy the condition $g(x)\leq cx^2$ for all $x\in\R$ because the term $x^4$ is dominant for higher values of $x$ (for $\gamma=0$ the leading term is $x^3$). On the other hand, if $u(t,\cdot)\in C^0([0,T];H^s(\R))$, with $s>3/2$, is a solution of \eqref{2.0.1}, then we have the estimate
$$
g(u(t,x))\leq \Big(\f{1}{2}+\f{\be}{3}\|u(0,\cdot)\|_{H^1(\R)}+\f{\gamma}{4}\|u(0,\cdot)\|_{H^1(\R)}^2\Big)u(t,x)^2.
$$

Above we used the fact that $u$ is bounded, $\|u(t,\cdot)\|_\infty\leq\|u(0,\cdot)\|_{H^1(\R)}$, and the functional \eqref{2.0.10} is invariant.

These observations will be recalled in section \ref{sec5}, where we apply our results to some specific models.

\section{Preliminaries results}\label{sec3}

In what follows, we denote the set of the positive integers by $\N$.

\begin{lemma}\label{lema3.1}
Assume that $f\in C^\infty(\R)$, $f(0)=0$, and $s>1/2$. For all $v\in H^s(\R)$, then $f(v)\in H^s(\R)$.
\end{lemma}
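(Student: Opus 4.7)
The plan is to combine the Sobolev embedding for $s>1/2$ with a factorization of $f$ that exploits $f(0)=0$, and, where needed, with the Banach algebra structure of $H^s(\R)$ for $s>1/2$.

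First, since $s>1/2$, the Sobolev embedding gives $v\in L^\infty(\R)\cap C^0(\R)$, so $M:=\|v\|_\infty<\infty$ and $f(v)$ is a well-defined bounded continuous function. Writing, via the fundamental theorem of calculus and $f(0)=0$, $f(y)=y\,\psi(y)$ with $\psi(y)=\int_0^1 f'(ty)\,dt\in C^\infty(\R)$, one obtains the pointwise bound $|f(v(x))|\leq C_M |v(x)|$ with $C_M:=\sup_{|y|\leq M}|\psi(y)|$; combined with $v\in L^2(\R)$ this yields $f(v)\in L^2(\R)$.

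For the range $s\in(1/2,1]$, I would use the Gagliardo (Slobodeckij) seminorm
\[
[w]_{H^s}^2=\iint_{\R^2}\frac{|w(x)-w(y)|^2}{|x-y|^{1+2s}}\,dx\,dy,
\]
together with the Lipschitz estimate $|f(v(x))-f(v(y))|\leq L_M |v(x)-v(y)|$, where $L_M=\sup_{|z|\leq M}|f'(z)|$, to bound $[f(v)]_{H^s}\leq L_M [v]_{H^s}$. Together with the $L^2$ estimate above, this gives $f(v)\in H^s(\R)$.

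For $s>1$, I would proceed inductively on $k=\lfloor s\rfloor$. Taking weak derivatives via the Fa\`a di Bruno formula writes $\partial_x^j f(v)$ as a finite sum of products of $f^{(m)}(v)$ (bounded in $L^\infty$ since $v\in L^\infty$) with polynomial combinations of lower-order derivatives of $v$; since $H^\sigma(\R)$ is a Banach algebra for $\sigma>1/2$, each such product is controlled by $\|v\|_{H^s}$ and a constant depending on $f$ and $M$. Real interpolation between $H^k$ and $H^{k+1}$ then disposes of the non-integer case. The most delicate point I expect is that $f^{(m)}(v)$ in general does not decay at infinity (since $f^{(m)}(0)$ need not vanish) and is therefore not itself in $H^\sigma(\R)$; I would resolve this by splitting $f^{(m)}(v)=f^{(m)}(0)+\big[f^{(m)}(v)-f^{(m)}(0)\big]$, pairing the constant first term directly with the $L^2$-factor coming from a derivative of $v$, and treating the second term by the same factorization and the inductive hypothesis applied to the smooth function $y\mapsto f^{(m)}(y)-f^{(m)}(0)$, which vanishes at $y=0$.
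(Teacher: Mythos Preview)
The paper does not prove this lemma at all; it simply cites \cite[Lemma~1]{const-mol}. Your attempt at a self-contained proof is therefore not comparable to the paper's approach but is a genuine addition.

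Your argument for $s\in(1/2,1]$ is correct: the factorisation $f(y)=y\,\psi(y)$ yields the $L^2$ control, and the Lipschitz bound $|f(v(x))-f(v(y))|\le L_M|v(x)-v(y)|$ transfers the Gagliardo seminorm from $v$ to $f(v)$ (with the obvious direct computation at $s=1$ via $(f(v))'=f'(v)v'$). The integer case via Fa\`a di Bruno, together with your splitting $f^{(m)}(v)=f^{(m)}(0)+\bigl[f^{(m)}(v)-f^{(m)}(0)\bigr]$, is also on the right track.

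The genuine gap is the sentence ``real interpolation between $H^k$ and $H^{k+1}$ then disposes of the non-integer case.'' Interpolation theorems concern linear operators (or, with substantial extra work, nonlinear maps satisfying specific Lipschitz-type hypotheses). The composition $v\mapsto f(v)$ is neither. Concretely: for $v\in H^s$ with $k<s<k+1$, your integer argument only gives $f(v)\in H^k$; there is no $H^{k+1}$ information available to interpolate against, because $v\notin H^{k+1}$.

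A repair that stays close to your outline is to recurse on the chain rule rather than interpolate: $f(v)\in H^s$ is equivalent to $f(v)\in L^2$ and $(f(v))'=f'(v)v'\in H^{s-1}$. Writing $f'(v)=f'(0)+g(v)$ with $g(0)=0$, the term $f'(0)v'$ is immediate, while $g(v)v'$ is handled by the inductive hypothesis applied to $g$ together with a product estimate of the type $H^{\sigma}\cdot H^{r}\hookrightarrow H^{r}$ for $\sigma>1/2$ and $|r|\le\sigma$ (or, equivalently, a fractional chain rule of Kato--Ponce/Christ--Weinstein type). Either device closes the gap, but it must replace the appeal to interpolation.
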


\begin{proof}
See \cite[Lemma 1]{const-mol} and references therein.
\end{proof}

\begin{lemma}\label{lema3.2}
Let $1\leq p\leq\infty$ and $v$ be a sub-multiplicative weight on $\R$. The following conditions are equivalent:
\begin{itemize}
    \item $\phi$ is a $v-$moderate weight function;
    \item for all measurable functions $f$ and $g$, the weighted Young estimate holds
    $$
    \|(f\ast g)\phi\|_p\leq c_0\|fv\|_1\|g\phi\|_p,
    $$
for some positive constant $c_0$.
\end{itemize}
\end{lemma}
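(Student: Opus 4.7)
The plan is to prove the two implications separately, since the lemma asserts an equivalence.

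For the implication $v$-moderateness $\Rightarrow$ weighted Young estimate, I would start by rewriting the moderateness inequality in a more convenient form: substituting $x=z-y$ in $\phi(x+y)\leq c_0\,v(x)\,\phi(y)$ gives $\phi(z)\leq c_0\,v(z-y)\,\phi(y)$ for all $y,z\in\R$. Inserting this bound inside the convolution integral $\int f(z-y)\,g(y)\,dy$ yields the pointwise estimate
\[
|(f\ast g)(z)|\,\phi(z)\;\leq\; c_0\,\bigl((|f|\,v)\ast(|g|\,\phi)\bigr)(z).
\]
The desired estimate then follows by taking $L^p$-norms and invoking the classical (unweighted) Young inequality $\|h_1\ast h_2\|_p\leq \|h_1\|_1\|h_2\|_p$ with $h_1=|f|\,v\in L^1(\R)$ and $h_2=|g|\,\phi\in L^p(\R)$, preserving the same constant $c_0$.

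For the converse, assume the weighted Young estimate and fix $x_0,y_0\in\R$. I would test the inequality on the indicator pair $f_\delta=\chi_{[x_0-\delta,x_0+\delta]}$, $g_\delta=\chi_{[y_0-\delta,y_0+\delta]}$, for $\delta>0$ small. A direct computation shows that $f_\delta\ast g_\delta$ is the triangular bump supported on $[x_0+y_0-2\delta,x_0+y_0+2\delta]$, attaining peak value $2\delta$ at $x_0+y_0$ and bounded below by $\delta$ on the sub-interval of length $2\delta$ centred at $x_0+y_0$. Each of the norms $\|f_\delta v\|_1$, $\|g_\delta\phi\|_p$ and $\|(f_\delta\ast g_\delta)\phi\|_p$ can be bounded from above or below in terms of $v(x_0)$, $\phi(y_0)$ and $\phi(x_0+y_0)$, multiplied by explicit powers of $\delta$ that match on both sides of the assumed Young inequality. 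Dividing through by the common factor and letting $\delta\to 0$, continuity of $v$ and $\phi$ yields the pointwise moderateness relation $\phi(x_0+y_0)\leq c\,v(x_0)\,\phi(y_0)$ for some constant $c>0$; its precise value is irrelevant here since the lemma only states that the two conditions are equivalent, not that the constants coincide.

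The main obstacle is the converse direction, because one must extract a pointwise inequality from a global functional-analytic bound. This is a standard approximate-identity argument, but it must cover every $p\in[1,\infty]$ with a single, carefully normalised test pair so that the $\delta$-scaling of the three norms matches; the case $p=\infty$ is the cleanest, while for finite $p$ the lower bound on $\|(f_\delta\ast g_\delta)\phi\|_p$ relies on confining the estimate to the sub-interval where $f_\delta\ast g_\delta\geq\delta$. Since the forward direction is the only implication actually used in the sequel and the result itself is entirely classical, one natural option would be to quote it from standard references on weighted convolution inequalities; the test-function sketch above nonetheless makes the proof short and self-contained.
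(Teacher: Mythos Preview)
Your proposal is correct. Both implications are argued soundly: the forward direction is the standard pointwise-then-Young argument, and your test-function approach for the converse works because the weights are assumed continuous, so the approximations $\|f_\delta v\|_1\sim 2\delta\,v(x_0)$, $\|g_\delta\phi\|_p\sim (2\delta)^{1/p}\phi(y_0)$, and the lower bound $\|(f_\delta\ast g_\delta)\phi\|_p\gtrsim \delta(2\delta)^{1/p}\phi(x_0+y_0)$ are justified and have matching $\delta$-powers.

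The comparison with the paper is brief: the paper does not prove this lemma at all but simply cites \cite[Proposition 3.2]{bran-imrn}. Your last paragraph anticipates exactly this option (``quote it from standard references on weighted convolution inequalities''), and that is precisely what the paper does. Your write-up therefore goes further than the paper by supplying a self-contained argument; since only the forward implication is ever invoked later (in Theorems \ref{teo4.4} and \ref{teo4.7}), the converse direction is in any case inessential for the applications, as you yourself observe.
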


\begin{proof}
See \cite[Proposition 3.2]{bran-imrn}.
\end{proof}

We now present some propositions that will be used when proving our main theorems. Parts of the proofs of propositions \ref{prop3.4}--\ref{prop3.6} can be found, or inferred from, the paper by Brandolese \cite{bran-imrn}. We, however, opt to present them for sake of completeness and clarity, since the powerful techniques introduced in \cite{bran-imrn} seem not to be widely used.

\begin{proposition}\label{prop3.1}
If $f\in C^\infty(\R)$, $n\in\N$, $s>3/2$, and $v\in H^s(\R)$, then $f(v)v_x^n\in H^{s-1}(\R)$.
\end{proposition}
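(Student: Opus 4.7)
The strategy is to reduce everything to two well-known facts: (i) $H^{s-1}(\R)$ is a Banach algebra whenever $s-1>1/2$, which is guaranteed by the hypothesis $s>3/2$; and (ii) Lemma \ref{lema3.1}, which gives $f(v)\in H^s(\R)$ provided $f\in C^\infty(\R)$ with $f(0)=0$ and $v\in H^s(\R)$.

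First I would observe that $v\in H^s(\R)$ implies $v_x\in H^{s-1}(\R)$, and then since $s-1>1/2$ the space $H^{s-1}(\R)$ is a Banach algebra under pointwise multiplication. By an immediate induction on $n$, it follows that $v_x^n\in H^{s-1}(\R)$.

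Next I would deal with $f(v)$. Lemma \ref{lema3.1} cannot be applied directly because $f(0)$ is not assumed to vanish. The remedy is the standard splitting $f(x)=f(0)+\tilde f(x)$, where $\tilde f(x):=f(x)-f(0)$ satisfies $\tilde f\in C^\infty(\R)$ and $\tilde f(0)=0$. Lemma \ref{lema3.1} then yields $\tilde f(v)\in H^s(\R)\hookrightarrow H^{s-1}(\R)$. Consequently,
\begin{equation*}
f(v)\,v_x^n \;=\; f(0)\,v_x^n \;+\; \tilde f(v)\,v_x^n,
\end{equation*}
where the first summand lies in $H^{s-1}(\R)$ because $v_x^n$ does, and the second summand lies in $H^{s-1}(\R)$ because it is the product of two elements of the Banach algebra $H^{s-1}(\R)$.

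There is no real obstacle here; the only subtlety is that, unlike in Lemma \ref{lema3.1}, the hypothesis $f(0)=0$ has been dropped, which is precisely why the presence of the factor $v_x^n$ (which has zero $L^2$-mean modes already absorbed) is needed to compensate for a possible non-zero constant $f(0)$. Combining the two estimates gives $f(v)\,v_x^n\in H^{s-1}(\R)$, which is the desired conclusion.
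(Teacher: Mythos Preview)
Your proof is correct and follows essentially the same approach as the paper: both split $f(v)=f(0)+\tilde f(v)$ with $\tilde f(0)=0$, invoke Lemma~\ref{lema3.1} for $\tilde f(v)\in H^s(\R)$, and use the Banach algebra property of $H^{s-1}(\R)$ for $s>3/2$. The only cosmetic difference is that the paper treats the case $n=1$ first and then writes $f(v)v_x^n=(f(v)v_x)v_x^{n-1}$, whereas you first show $v_x^n\in H^{s-1}(\R)$ by induction and then multiply; the content is the same.
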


\begin{proof}
Let us first consider $n=1$ and rewrite $f(v)v_x=(f(v)-f(0))v_x+f(0)v_x$. Clearly $f(0)v_x\in H^{s-1}(\R)$, while $f(\cdot)-f(0)$ satisfies the conditions in Lemma \ref{lema3.1}, and thus it belongs to $H^s(\R)$. By the algebra property (e.g., see \cite[page 320, exercice 6]{taylor}) we conclude that $(f(v)-f(0))v_x\in H^{s-1}(\R)$. The general case is a consequence of the algebra property and $f(v)v_x^n=(f(v)v_x)v_x^{n-1}$.
\end{proof}

\begin{proposition}\label{prop3.2}
If $f\in L^1(\R)\cap L^\infty(\R)$, then $f\in L^p(\R)\cap L^\infty(\R)$, for any $1\leq p\leq\infty$.
\end{proposition}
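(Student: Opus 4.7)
The plan is to use the elementary $L^p$ interpolation inequality. The endpoints $p=1$ and $p=\infty$ are immediate from the hypothesis, so the only real content is in the range $1<p<\infty$.

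For such $p$, I would split the integrand as
\bb\nonumber
|f(x)|^p = |f(x)|\cdot |f(x)|^{p-1},
\ee
and bound the second factor pointwise a.e.\ by $\|f\|_\infty^{p-1}$, which is finite since $f\in L^\infty(\R)$. Integration then gives
\bb\nonumber
\int_\R |f(x)|^p\,dx \leq \|f\|_\infty^{p-1}\,\|f\|_1 < \infty,
\ee
so that $f\in L^p(\R)$ with the explicit estimate $\|f\|_p \leq \|f\|_\infty^{1-1/p}\|f\|_1^{1/p}$. Since $\|f\|_\infty$ is finite by assumption, the conclusion $f\in L^p(\R)\cap L^\infty(\R)$ follows for every $1\leq p\leq\infty$.

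There is no real obstacle here: the statement is the standard log-convexity of $L^p$ norms, and the proof reduces to one line once one observes $|f|^{p-1}\in L^\infty(\R)$. Measurability of $f$ is automatic from $f\in L^1(\R)$. The only choice to make is whether to present the inequality in the raised-power form above or to invoke Hölder's inequality with exponents $p$ and $p/(p-1)$ applied to $|f|^{1/p}\cdot |f|^{(p-1)/p}$; both yield the same bound, and I would opt for the former since it is more transparent and avoids invoking any named inequality.
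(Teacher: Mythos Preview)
Your proof is correct and follows essentially the same approach as the paper: both split $|f|^p=|f|\cdot|f|^{p-1}$ and bound the second factor by the $L^\infty$ norm. Your version is in fact slightly cleaner, since the paper introduces $a:=\max\{1,\|f\|_\infty\}$ before bounding $|f(x)|^p\leq a^{p-1}|f(x)|$, whereas you observe directly that $|f(x)|^{p-1}\leq\|f\|_\infty^{p-1}$ without needing the auxiliary constant.
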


\begin{proof}
Let $a:=\max\{1,\|f\|_\infty$\}. Then $|f(x)|^p\leq a^{p-1} |f|$ and $\|f\|_p\leq a\|f\|_1$.
\end{proof}

\begin{proposition}\label{prop3.3}
Assume that $f\in L^1(\R)\cap L^\infty(\R)\cap C^0(\R)$ and $p>1$. Then, for any $\epsilon\in(0,\|f\|_\infty)$, there exists $L_\epsilon>0$ and $r\in(1,p)$ such that
$$
\sqrt[p]{L_\epsilon}(\|f\|_\infty-\epsilon)\leq\|f\|_p\leq\|f\|_r^{r/p}\|f\|_\infty^{1-r/p}.
$$
\end{proposition}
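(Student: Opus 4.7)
The proposition splits into two independent inequalities, so the plan is to establish them separately; both are short and rely only on standard measure-theoretic arguments together with Proposition \ref{prop3.2}.

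For the lower bound, I would exploit the continuity of $f$. Given $\epsilon\in(0,\|f\|_\infty)$, the identity $\|f\|_\infty=\sup_{x\in\R}|f(x)|$ (which holds because $f\in C^0(\R)$) provides some $x_0\in\R$ with $|f(x_0)|>\|f\|_\infty-\epsilon/2$. By continuity there is an open neighbourhood $U$ of $x_0$ on which $|f(x)|>\|f\|_\infty-\epsilon$. Setting $L_\epsilon$ equal to the Lebesgue measure of any bounded subinterval of $U$ (e.g. a small interval centred at $x_0$), one obtains
\bb
\|f\|_p^p\geq\int_U |f(x)|^p\,dx\geq L_\epsilon(\|f\|_\infty-\epsilon)^p,
\ee
and taking $p$-th roots yields the desired left-hand inequality.

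For the upper bound, I would fix any $r\in(1,p)$, which is possible because $p>1$. By Proposition \ref{prop3.2} the hypothesis $f\in L^1(\R)\cap L^\infty(\R)$ guarantees $f\in L^r(\R)\cap L^\infty(\R)$, so $\|f\|_r<\infty$. Splitting the exponent as $|f|^p=|f|^{p-r}|f|^r$ and estimating the first factor by $\|f\|_\infty^{p-r}$, we obtain
\bb
\|f\|_p^p=\int_\R |f(x)|^{p-r}|f(x)|^r\,dx\leq\|f\|_\infty^{p-r}\|f\|_r^r,
\ee
and taking $p$-th roots gives $\|f\|_p\leq\|f\|_r^{r/p}\|f\|_\infty^{1-r/p}$.

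There is no real obstacle here; the only point requiring any care is the extraction of $L_\epsilon$ in the lower bound, where continuity of $f$ (rather than only essential boundedness) is what allows one to replace the essential supremum by a pointwise value attained on a set of positive measure. Combining the two inequalities gives the statement, with the constants $L_\epsilon$ and $r$ identified above.
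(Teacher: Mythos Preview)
Your proof is correct and follows essentially the same route as the paper's. The only cosmetic difference is in the lower bound: the paper works with the entire super-level set $X_\epsilon=\{x:|f(x)|>\|f\|_\infty-\epsilon\}$ and invokes $f\in L^1(\R)$ to guarantee its measure $L_\epsilon$ is finite, whereas you pick a bounded subinterval of a neighbourhood of a point where $|f|$ is large, which gives finite $L_\epsilon$ directly without appealing to integrability.
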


\begin{proof}
In view of proposition \ref{prop3.2} we know that $f\in L^r(\R)$, $r>1$. On the other hand, for $1\leq r<p$, we have
$$
\|f\|_p^p=\int_\R|f(x)|^{p-r}|f(x)|^rdx\leq \|f\|_\infty^{p-r}\|f\|_r^r.
$$

Let $\epsilon\in(0,\|f\|_\infty)$ and $X_\epsilon:=\{x\in\R;\,\,|f(x)|>\|f\|_\infty-\epsilon\}$. Since $f$ is continuous, we have $X_\epsilon\neq\emptyset$. Moreover, due to $f\in L^1(\R)$, we can find $L_\epsilon>0$ such that
$$
0<L_\epsilon(\|f\|_\infty-\epsilon)=\int_{X_\epsilon}(\|f\|_\infty-\epsilon)dx\leq \int_{X_\epsilon}|f(x)|dx\leq \|f\|_1,
$$
therefrom we obtain
$$
L_\epsilon(\|f\|_\infty-\epsilon)^p\leq \|f\|_p^p\leq \|f\|_r^{r/p}\|f\|_\infty^{1-r/p},
$$
which implies the result.
\end{proof}

Proposition \ref{prop3.3} has as a straightforward consequence the following fact: given a function $f$ in the class $L^1(\R)\cap L^\infty(\R)\cap C^0(\R)$, then $\|f\|_p\rightarrow\|f\|_\infty$, as $p\rightarrow\infty$, a fact that we shall use very often in section \ref{sec4}.

\begin{proposition}\label{prop3.4}
Suppose that $v$ is a continuous sub-multiplicative function such that $\inf_\R v>0$ and $e^{-|\cdot|}v(\cdot)\in L^1(\R)$. Then $e^{-|\cdot|}v(\cdot)\in L^p(\R)$, $2\leq p\leq\infty$.
\end{proposition}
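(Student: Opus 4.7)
The plan is to combine Proposition \ref{prop3.2} with a direct estimate that upgrades $L^1$-integrability of $e^{-|x|}v(x)$ to boundedness. By Proposition \ref{prop3.2}, once I establish that $e^{-|\cdot|}v(\cdot)\in L^1(\R)\cap L^\infty(\R)$, the full conclusion for $2\leq p\leq\infty$ follows at once. So the entire task reduces to proving $e^{-|\cdot|}v(\cdot)\in L^\infty(\R)$.

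To bound $e^{-|x|}v(x)$ pointwise, I would exploit sub-multiplicativity to control $v$ on an interval around an arbitrary point $x$ by the value $v(x)$ itself. First, since $v$ is continuous, set $C:=\sup_{|z|\leq 1}v(z)<\infty$. For any $x\in\R$ and any $y\in[x-1,x+1]$, write $x=(x-y)+y$; sub-multiplicativity gives $v(x)\leq v(x-y)v(y)\leq C\,v(y)$, so $v(y)\geq v(x)/C$. Since also $|y|\leq|x|+1$ on this interval, one gets the lower bound
\begin{equation*}
\int_{x-1}^{x+1} e^{-|y|}v(y)\,dy \;\geq\; \int_{x-1}^{x+1} e^{-|x|-1}\,\frac{v(x)}{C}\,dy \;=\; \frac{2}{eC}\,e^{-|x|}v(x).
\end{equation*}

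The left-hand side is bounded above by $\|e^{-|\cdot|}v(\cdot)\|_1$, which is finite by hypothesis. Rearranging yields
\begin{equation*}
e^{-|x|}v(x)\;\leq\;\frac{eC}{2}\,\|e^{-|\cdot|}v(\cdot)\|_1\qquad\text{for every } x\in\R,
\end{equation*}
so $e^{-|\cdot|}v(\cdot)\in L^\infty(\R)$. Combined with the $L^1$ hypothesis, Proposition \ref{prop3.2} concludes the proof. The only mildly delicate point is the sub-multiplicativity manipulation $v(x)\leq v(x-y)v(y)$ together with the continuity of $v$ to obtain the uniform constant $C$ on the unit window; everything else is a routine comparison between an integral and a pointwise value.
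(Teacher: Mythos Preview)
Your proof is correct and takes a genuinely different, more elementary route than the paper. Both arguments reduce to showing $h(x):=e^{-|x|}v(x)\in L^\infty(\R)$ and then invoke Proposition~\ref{prop3.2}. The paper proceeds by a case analysis: it looks at the open set $I=\{h>1\}$, covers it by intervals $(a_n,b_n)$ with $h(a_n)=h(b_n)=1$, argues that only finitely many of these can have length $\geq 1$, and then on each short interval uses sub-multiplicativity (writing $x=a_n+t$ or $x=b_n+t$ with $|t|<1$) to bound $h$ by $\sup_{[-1,1]}h$. Your argument bypasses all of this with a single averaging estimate: the same sub-multiplicativity manipulation $v(x)\le v(x-y)v(y)$ gives a uniform lower bound for $h$ on $[x-1,x+1]$, and comparing $\int_{x-1}^{x+1}h$ with $\|h\|_1$ yields the pointwise bound directly. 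This is cleaner, avoids the covering argument entirely, and even produces an explicit constant $\tfrac{eC}{2}\|h\|_1$. Neither proof actually uses the hypothesis $\inf_\R v>0$; it is superfluous for this proposition.
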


\begin{proof}
Let us define $h(x)=e^{-|x|}v(x)$. In particular, $h$ is non-negative, continuous, and a member of $L^1(\R)$ by construction. We only need to show that $h\in L^\infty(\R)$, because then the result will be a foregone conclusion of proposition \ref{prop3.2}.

Let $I:=\{x\in\R;\,h(x)>1\}$. If $I=\emptyset$, then $h$ is clearly bounded. By assuming $I\neq\emptyset$, we can find a cover $(a_n,b_n)$, $n\in\N'\subseteq\N$ such that $h(a_n)=h(b_n)=1$ and
$$
I\subseteq\bigcup_{n\in\N'}(a_n,b_n).
$$
If the set $\N'$ is finite, say $\N'=\{1,\cdots,n\}$, then $I$ is contained in a compact set $J$ and then, $\|h\|_\infty=\max\limits_{J}|h(x)|$.

The case in which $\N'$ is infinite in more challenging. We first note that we can only have a finite number of indices for which $b_n-a_n\geq1$, otherwise the condition 
$$\int_\R|h(x)|dx<\infty$$
would not be true.

Let $n_0\in\N'$ such that $n>n_0$ implies $b_n-a_n<1$. Moreover, we can take $n_0$ large enough so that either $I_n\subseteq(0,\infty)$ or $I_n\subseteq(-\infty,0)$. Let $J$ be the closure of the reunion of the intervals with indexes up to $n_0$. Hence, $J$ is a compact set.

{\bf Claim:} We claim that for any $n>n_0$, $\sup\limits_{x\in I_n}h(x)\leq \sup\limits_{x\in[-1,1]}h(x)$.

If our claim is true, then we have
$$
\sup_{x\in\R}|h(x)|=\sup_{x\in I}|h(x)|=\sup_{x\in J\cup[-1,1]}|h(x)|,
$$
and the same argument used for the case $\N'$ assures that $h$ is bounded.

We now prove the claim, which is divided in two mutually exclusive cases. We recall that $h(a_n)=h(b_n)=1$.
\begin{itemize}
    \item {\bf Case $I_n\subseteq(-\infty,0)$.} For any $x,y\in (-\infty,0)$, note that 
    $$h(x+y)=e^{-|x+y|}v(x+y)\leq e^xv(x)e^y v(y)=h(x)h(y).$$
    For any $x\in I_n$, we can decompose it as $x=b_n+t$, for some $t\in(-1,0)$, and then
    $$h(x)=h(b_n+t)\leq h(b_n)h(t)\leq\sup_{t\in[-1,0]}h(t).$$
    
    \item {\bf Case $I_n\subseteq(-\infty,0)$.} Similarly as in the previous case, each $x\in I_n$ can be written as $x=a_n+t$, for some $t\in(0,1)$, and we also have $h(y+z)\leq h(y)h(z)$, for all $y,z\in(0,\infty)$. Thus, we have
    $$h(x)=h(a_n+t)\leq h(a_n)h(t)\leq\sup_{t\in[0,1]}h(t).$$
\end{itemize}
In both cases, we conclude that $h$ is bounded over the sets $I_n$ by $\sup\limits_{x\in[-1,1]}h(x)$.
\end{proof}

\begin{proposition}\label{prop3.5}
Assume that $\phi$ is a locally absolutely continuous function $\phi:\R\rightarrow\R$ such that $|\phi'(x)|\leq A |\phi(x)|$ a.e., for some $A>0$, and let us define, for each $N\in\N$, the function
\bb
\phi_N(x)=\min\{\phi(x),N\}.
\ee
Then,
\begin{enumerate}[(a)]
\item $(\phi_N)_{N\in\N}\subseteq C^0(\R)$ and is locally absolutely continuous;
\item $\phi_N(x)\leq\phi_{N+1}(x)\leq \phi(x)$, for each fixed $x\in\R$ and any $N\in \N$;
\item $(\phi_N)_{N}$ converges pointwise to $\phi(x)$;
\item $\|\phi_N\|_\infty\leq N$;
\item $|\phi'_N(x)|\leq A |\phi_N(x)|$ a.e.
\end{enumerate}
\end{proposition}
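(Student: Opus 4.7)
The plan is to dispatch items (a)--(d) as direct consequences of the formula $\phi_N(x) = \min\{\phi(x), N\}$, and to concentrate the real work on item (e).

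For (a), I would invoke the one-Lipschitz identity $|\min\{a,N\}-\min\{b,N\}|\leq|a-b|$, which instantly transfers both continuity and the absolute-continuity estimate from $\phi$ to $\phi_N$ on every compact interval. Item (b) is the pointwise comparison $\min\{\phi(x), N\} \leq \min\{\phi(x), N+1\} \leq \phi(x)$. For (c), for each fixed $x$ the number $\phi(x)$ is finite, so as soon as $N > \phi(x)$ one has $\phi_N(x) = \phi(x)$, forcing convergence. Item (d) follows since $\phi$ is positive (built into the notion of weight being used in Definition \ref{def2.1}), so $0 < \phi_N \leq N$ and hence $\|\phi_N\|_\infty \leq N$.

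The substantive part is (e). The plan is to split $\R$ as $\R = A^-_N \cup E_N \cup A^+_N$, where $A^-_N = \{\phi < N\}$, $A^+_N = \{\phi > N\}$, and $E_N = \{\phi = N\}$; both $A^\pm_N$ are open by the continuity of $\phi$. On $A^-_N$, $\phi_N$ coincides with $\phi$ in a neighbourhood of each point, so $\phi_N' = \phi'$ a.e.\ and $\phi_N = \phi$, and the hypothesis on $\phi$ gives $|\phi_N'|\leq A|\phi|=A|\phi_N|$ a.e. On $A^+_N$, $\phi_N$ is locally the constant $N$, hence $\phi_N' = 0$ a.e., and the inequality reads $0 \leq AN = A|\phi_N|$.

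The single obstacle is the closed level set $E_N$, which in principle may have positive Lebesgue measure. The key tool here is the classical fact that a locally absolutely continuous function has zero derivative almost everywhere on any of its level sets (a consequence of the Lebesgue differentiation theorem, or equivalently of the Serrin--Varberg chain rule applied to the composition $t\mapsto|t-N|$ with $\phi$). Applied separately to $\phi$ and to the constant function $N$ on $E_N$, this yields $\phi'(x)=\phi_N'(x)=0$ for a.e.\ $x\in E_N$; since $|\phi_N(x)|=N$ on $E_N$, the inequality $|\phi_N'(x)| = 0\leq AN = A|\phi_N(x)|$ holds a.e.\ on $E_N$ as well. Patching the three pieces delivers (e) almost everywhere on $\R$, which completes the proof.
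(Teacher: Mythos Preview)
Your argument is correct and follows essentially the same decomposition as the paper: items (a)--(d) are dispatched quickly, and (e) is handled by partitioning $\R$ into $\{\phi<N\}$, $\{\phi>N\}$, and the level set $\{\phi=N\}$. The only substantive differences are cosmetic: for (a) the paper invokes the identity $\min\{f,g\}=\tfrac{1}{2}(f+g)-\tfrac{1}{2}|f-g|$ rather than your Lipschitz argument, and for (e) the paper treats the level set by distinguishing whether its interior is empty or not. Your use of the classical fact that a locally absolutely continuous function has zero derivative a.e.\ on each level set is in fact cleaner; the paper's assertion that ``empty interior forces measure zero'' is not literally true for arbitrary closed sets, so your route closes that gap.
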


\begin{proof}
We begin by noticing that for any two functions $f$ and $g$, we have
$$\min\{f,g\}=\f{f+g}{2}-\f{|f-g|}{2}.$$
\begin{enumerate}[(a)]
\item We observe that both $\phi(\cdot)+N$ and $|\phi(\cdot)-N|$ are locally absolutely continuous. The definition of minimum given above and this observation prove the result;
\item Recall that $\phi_N(x)\leq \phi(x)$, for any $N\in\N$. Moreover, $\phi_N(x)=\phi_{N+1}(x)$, as long as $\phi(x)\leq N$, and for $x$ such that $\phi(x)>N$, then $\phi_{N+1}(x)> N\geq\phi_{N}(x)$;
\item Immediate;
\item Note that $\phi_N(x)\leq N$, for any $x\in\R$;
\item Consider the open sets $A_N:=\{x\in\R;\,\,\phi(x)<N$\}, $B_N:=\{x\in\R;\,\,\phi(x)>N$\}, and let $U_N=\R\setminus(A_N\cup B_N)$. If $A_N\neq\emptyset$ we have $\phi'_N(x)=\phi'(x)$, for any $x\in A_N$, whereas if $B_N\neq\emptyset$ and $x\in B_N$, then $\phi_N'(x)=0$. In any case, we have $|\phi_N'(x)|\leq A|\phi_N(x)|$. Finally, if $U_N\neq\emptyset$, and its interior is non-empty, then for any interior point $x$ we conclude that $\phi_N'(x)=0$, an already treated case. In case the interior of $U_N$ is empty, then we are forced to conclude that $U_N$ has measure $0$. In any case we conclude that $|\phi'_N(x)|\leq A |\phi_N(x)|$ a.e.
\end{enumerate}
\end{proof}

\begin{proposition}\label{prop3.6}
Let $\phi$ and $(\phi_N)_{N\in\N}$ as in Proposition \ref{prop3.5}. If $\phi$ is $v-$moderate, for some sub-multiplicative weight function $v$ with $\inf_\R v>0$, then the sequence $(\phi_N)_{N\in\N}$ is uniformly $v-$moderate with respect to $N$.
\end{proposition}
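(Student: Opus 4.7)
The goal is to produce a single constant $c>0$, independent of $N$, so that $\phi_N(x+y)\leq c\,v(x)\,\phi_N(y)$ holds for all $x,y\in\R$ and all $N\in\N$. Since $\phi$ is $v$-moderate there is $c_0>0$ with $\phi(x+y)\leq c_0\,v(x)\,\phi(y)$, and by hypothesis $m:=\inf_\R v>0$. My plan is to take $c:=\max\{c_0,1/m\}$ and verify the inequality by splitting into two cases according to the size of $\phi(y)$ relative to $N$, which is the only feature that changes the explicit form of $\phi_N(y)$.

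First I would treat the case $\phi(y)\leq N$. Then by construction $\phi_N(y)=\phi(y)$, and using $\phi_N(x+y)\leq \phi(x+y)$ together with the moderateness of $\phi$ gives
\[
\phi_N(x+y)\leq \phi(x+y)\leq c_0\,v(x)\,\phi(y)=c_0\,v(x)\,\phi_N(y),
\]
which is the desired bound with constant $c_0\leq c$.

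Next I would handle the case $\phi(y)>N$, where $\phi_N(y)=N$. The point is that now I cannot invoke the moderateness inequality to compare with $\phi_N(y)$, so instead I use the uniform bound $\phi_N(x+y)\leq N=\phi_N(y)$ provided by the truncation itself. Combined with $v(x)\geq m$, this yields
\[
\phi_N(x+y)\leq \phi_N(y)\leq \frac{v(x)}{m}\,\phi_N(y)=\frac{1}{m}\,v(x)\,\phi_N(y),
\]
and the constant $1/m$ is again dominated by $c$.

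The main (and only real) obstacle is the second case: the naive attempt $\phi_N(x+y)\leq\phi(x+y)\leq c_0\,v(x)\,\phi(y)$ fails to give a bound in terms of $\phi_N(y)=N$ because $\phi(y)$ may be arbitrarily larger than $N$. The observation that saves the argument is that we do not need to go through $\phi$ at all in that regime: the truncation makes $\phi_N$ automatically bounded by $N$ on both sides, and the hypothesis $\inf_\R v>0$ is precisely what is needed to absorb the factor $v(x)$ on the right-hand side. Combining the two cases yields the uniform $v$-moderate estimate with constant $c=\max\{c_0,1/\inf_\R v\}$, which depends only on $\phi$ and $v$ and not on $N$.
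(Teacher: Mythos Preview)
Your proof is correct and follows essentially the same approach as the paper: both split into the cases $\phi(y)\leq N$ and $\phi(y)>N$, use the $v$-moderateness of $\phi$ in the first case, the truncation bound $\phi_N\leq N$ together with $\inf_\R v>0$ in the second, and arrive at the same constant $\max\{c_0,1/\inf_\R v\}$.
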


\begin{proof}
We only need to prove the existence of a constant $c_1>0$ that makes the inequality $\phi_N(x+y)\leq c_1v(x)\phi_N(y)$ true.

Since $\phi$ is $v-$moderate we can guarantee the existence of a positive constant $c_0$ such that \eqref{2.0.9} holds. For each $N\in\N$, define
$U_N=\{x\in\R;\,\,\phi(x)\leq N\}.$

If $y\in U_N$, we have 
$$\phi_N(x+y)\leq\phi(x+y)\leq c_0v(x)\phi_N(y).$$

On the other hand, since $\phi_N(z)\leq N$, for any $z\in\R$, taking $y\notin U_N$, then 
$$\phi_N(x+y)\leq N=\phi_N(y).$$
Let $\al:=\inf_R v$. Thus $\al^{-1}v\geq1$, $N\leq \al^{-1}v(x)N$, and $\phi_N(y)\leq \al^{-1}v(x)\phi_N(y)$.

Taking $c_1=\max\{c_0,\al^{-1}\}$, by \eqref{2.0.9} and the comments above we conclude that
$$
\ba{lcl}
\min\{\phi(x+y),N\}&\leq&\ds{\min\{c_1v(x)\phi(y),c_1v(x)N\}\leq c_1v(x)\min\{\phi(y),N\}}\\
\\
&=&\ds{c_1v(x)\phi_N(y)}.
\ea
$$
Therefore, $\phi_N(x+y)\leq c_1v(x)\phi_N(y)$.
\end{proof}

\section{Proof of the main results}\label{sec4}

\begin{theorem}\label{teo4.1}
Assume that ${\bf H}_1$--${\bf H}_3$ hold, $u\in C^0([0,T];H^s(\R))$, $s>3/2$, is a solution of \eqref{2.0.1}, and $(h_t(\cdot))_t$ and $(F_t(\cdot))_t$ are the families defined in \eqref{2.0.3} and \eqref{2.0.4}, respectively. Then
\begin{enumerate}[(a)]
\item $(h_t(\cdot))_{t\in[0,T]}\subseteq H^{s-1}(\R)$, and in particular, it is continuous;
\item $(F_t(\cdot))_{t\in[0,T]}\subseteq H^{s}(\R)$, and in particular, it is $C^1$;
\item $h_{t'}(x)=0$, for some $t'\in[0,T]$ and $x\in\R$, if and only if $u\equiv0$;

\item For each $(t,x)\in[0,T]\times\R$, define
$$
\sigma(t,x):=\left\{\ba{lcl}
\ds{\f{1}{t}\int_0^t h_{\tau}(x)d\tau},\quad t>0,\\
\\
\ds{h_0(x)}, \quad t=0.
\ea
\right.
$$
Then $\sigma(\cdot,\cdot)$ is a continuous function from $[0,T]\times\R$ to $\R$.
\end{enumerate}
\end{theorem}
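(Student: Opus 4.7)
The plan is to dispatch the four items in order, with (a) supplying the Sobolev-regularity backbone for (b) and (d), while (c) rests on the non-negativity encoded in ${\bf H}_2$ and ${\bf H}_3$ together with uniqueness of the Cauchy problem.

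For (a), I would split
\begin{equation*}
h_t(x)=g(u(t,x))+\f{1}{2}f''(u(t,x))\,u_x(t,x)^2.
\end{equation*}
Since $g\in C^\infty(\R)$ and $g(0)=0$ by ${\bf H}_3$, Lemma \ref{lema3.1} gives $g(u(t,\cdot))\in H^s(\R)$. Since $f''\in C^\infty(\R)$, Proposition \ref{prop3.1} applied with $n=2$ puts $f''(u(t,\cdot))u_x(t,\cdot)^2\in H^{s-1}(\R)$. Summing, $h_t\in H^{s-1}(\R)$, and since $s-1>1/2$ the Sobolev embedding $H^{s-1}(\R)\hookrightarrow C^0(\R)$ makes each $h_t$ continuous. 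For (b), I write $F_t=\p_x\Lambda^{-2}h_t$; since $\Lambda^{-2}=(1-\p_x^2)^{-1}$ is an isomorphism $H^{s-1}(\R)\to H^{s+1}(\R)$ and $\p_x:H^{s+1}(\R)\to H^s(\R)$ is bounded, (a) yields $F_t\in H^s(\R)$. With $s>3/2$, the embedding $H^s(\R)\hookrightarrow C^1(\R)$ makes $F_t$ of class $C^1$.

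For (c), the backward implication is immediate. For the forward direction, I interpret the hypothesis as ``$h_{t'}(x)=0$ for some $t'\in[0,T]$ and every $x\in\R$''. By ${\bf H}_2$ and ${\bf H}_3$ both summands of $h_{t'}$ are pointwise non-negative, so if their sum vanishes identically then $g(u(t',x))=0$ for every $x$; ${\bf H}_3$ then forces $u(t',\cdot)\equiv 0$. Viewing $t'$ as a new initial time and invoking the local well-posedness of \cite{tian-nach} both forward and backward from $t'$, the unique $H^s$-solution with zero data at $t'$ is the zero function, hence $u\equiv 0$ on $[0,T]$.

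For (d), the crucial preliminary is joint continuity of $(t,x)\mapsto h_t(x)$: because $u\in C^0([0,T];H^s(\R))$ with $s>3/2$, the embedding $H^s(\R)\hookrightarrow C^1(\R)$ shows that $u$ and $u_x$ are jointly continuous on $[0,T]\times\R$, and so is the composition $h_t(x)$. For $t_0>0$ continuity of $\sigma$ at $(t_0,x_0)$ then reduces to a standard parameter-integral argument. At $(0,x_0)$ I would write
\begin{equation*}
\sigma(t,x)-h_0(x_0)=\f{1}{t}\int_0^t\bigl(h_\tau(x)-h_0(x_0)\bigr)\,d\tau,
\end{equation*}
bound its modulus by $\sup_{\tau\in[0,t]}|h_\tau(x)-h_0(x_0)|$, and use joint continuity to let $(t,x)\to(0^+,x_0)$.

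The step I expect to be the main obstacle is the backward-uniqueness argument in (c): one must be sure that the Cauchy problem for \eqref{2.0.1} is well-posed backward in time as well, or equivalently argue via the time-reversed equation, which is routine for CH-type models but should be cited carefully. The Sobolev manipulations in (a)--(b) and the parameter-integral continuity in (d) are essentially mechanical once the preliminary results of section \ref{sec3} and the Sobolev embeddings are in hand.
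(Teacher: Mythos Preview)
Your argument is correct, and parts (a) and (b) match the paper's proof almost verbatim (you are in fact more explicit in invoking Lemma~\ref{lema3.1} and Proposition~\ref{prop3.1}). The genuine difference is in (c): you rely on forward and backward uniqueness for the Cauchy problem, and rightly flag backward well-posedness as the delicate point. The paper bypasses this issue entirely by using the conserved quantity \eqref{2.0.10}: once $u(t',\cdot)\equiv 0$ is established, one has $\|u(t,\cdot)\|_{H^1(\R)}=\|u(t',\cdot)\|_{H^1(\R)}=0$ for every $t\in[0,T]$, and the conclusion follows without any appeal to uniqueness or time reversal. This is cleaner and avoids the citation burden you anticipated. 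A smaller difference occurs in (d): the paper invokes the $C^1$-in-time regularity $u\in C^1([0,T^\ast);H^{s-1}(\R))$ from \cite[Theorem~3.1]{tian-nach} to write $h_t(x)=h_0(x)+O(t)$ near $t=0$, whereas your sup-bound via joint continuity of $h$ is more elementary and does not require the extra derivative in $t$; either route works.
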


\begin{proof}
\begin{enumerate}[(a)]
    \item The conditions on $u$ imply that $u_x\in H^{s-1}(\R)$, with $s>1/2$. By the algebra property \cite[page 320, exercice 6]{taylor}, we have $h_t(\cdot) \in H^{s-1}(\R)$ and the Sobolev Embedding Theorem (see \cite[page 317]{taylor} says that it is continuous;
    \item Consequence of the above result, the fact that the operator $\p_x\Lambda^{-2}$ applies $H^s(\R)$ into $H^{s+1}(\R)$, and the Sobolev Embedding Theorem (see \cite[page 317]{taylor};
    \item From ${\bf H}_2$ and ${\bf H}_3$ we conclude that $u(t',x)=0$, for any $x\in\R$, which gives $\|u(t',\cdot)\|_{H^1(\R)}=0$. By \eqref{2.0.10} we know that the Sobolev norm of the solution $u$ is time invariant, wherefrom we conclude that $u$ vanishes for each $t$ such that the solution exists.
    
    \item The continuity of $\sigma$ on $(0,T]\times\R$ comes from the continuity of $h_t(x)$. It is enough to prove its continuity when $t$ approaches $0$.
    
    The conditions on $u$ jointly with the well posedness result \cite[Theorem 3.1]{tian-nach} imply that $u$ is $C^1$ with respect to $t$, so that, taking $\epsilon>0$ sufficiently small and $0<t<\epsilon$, we have
    $$
    h_t(x)=h_0(x)+O(t),
    $$
    that is, $\sigma(t,x)=h_0(x)+O(t)$, implying the continuity of $\sigma$ near the line $\{0\}\times\R$.
\end{enumerate}
\end{proof}

Henceforth, $h_t(\cdot)$ and $F_t(\cdot)$ are the functions defined in \eqref{2.0.3} and \eqref{2.0.4}, while $M$ is a fixed constant satisfying \eqref{2.0.11}. They will be used throughout this section without further mention to their meaning or definition. 

Finally, let us emphasise a significant fact: some of our results are concerning for $p\in[2,\infty]$, see theorems \ref{teo2.4} and \ref{teo2.5}. In the demonstrations given in this section we replace $p$ by $2p$, and this implies that $p$ will be considered within $[1,\infty]$. The final outcome is exactly the same, but the demonstrations are significantly and technically simpler.

\subsection{Proof of theorem \ref{teo2.1} and its corollaries}\label{sec4.1}

{\bf Proof of Theorem \ref{teo2.1}.} Note that $F'_t(x)=\p_x F_t(x)=\Lambda^{-2}h_t(x)-h_t(x)$, where we used the identity $\p_x^2\Lambda^{-2}=\Lambda^{-2}-1$. As long as $x\in[a,b]$, then $F'_{t^\ast}(x)=\Lambda^{-2}h_{t^\ast}(x)$, and
$$
\int_{a}^b\Lambda^{-2}h_{t^\ast}(x)dx=\int_{a}^bF'_{t^\ast}(x)dx=F_{t^\ast}(b)-F_{t^\ast}(a)=0,
$$
implying that $h_{t^\ast}(x)=0$. The result is then a consequence of theorem \ref{teo4.1}. \hfill$\square$

{\bf Proof of Corollary \ref{cor2.1}.} By \eqref{2.0.1} we have 
\bb\label{4.1.1}
F_t(x)=-(u_t(t,x)+f'(u(t,x))u_x(t,x),
\ee
and from the conditions given we can find numbers $t^\ast$, $a$ and $b$ such that $\{t^\ast\}\times[a,b]\subseteq\Omega$ and $F_{t\ast}(a)=F_{t^\ast}(b)=0$. The result is an immediate consequence of theorem \ref{teo2.1}. \hfill$\square$

{\bf Proof of Corollary \ref{cor2.2}.} If the result were not true, we would then be able to find a set $\{t^\ast\}\times[a,b]$ such that $F_{t^\ast}(a)=F_{t^\ast}(b)$ (see \eqref{4.1.1}) and $f_{t^\ast}(x)=0$, $a<x<b$, which contradicts theorem \ref{teo2.1}. \hfill$\square$

{\bf Proof of Corollary \ref{cor2.3}.} If $u$ were compactly supported on $[0,T]\times\R$, we would be able to find a non-empty open set $\Omega$ such that $u\big|_\Omega\equiv0$. By corollary \ref{cor2.1} $u$ is trivial, which is a contradiction. \hfill$\square$

\subsection{Proof of theorem \ref{teo2.4}}\label{sec4.2}

\begin{theorem}\label{teo4.2}
Assume that ${\bf H_1}-{\bf H_3}$ hold, and let $u\in C^0([0,T];H^s(\R))$, $s>3/2$, be a solution of \eqref{2.0.1}. Suppose that $\phi$ and $\phi_N$ are a function and a sequence satisfying the conditions in proposition \ref{prop3.5}, respectively, and $g$ satisfy \eqref{2.0.5}. Then
\bb\label{4.2.1}
\f{d}{dt}\|\phi_N(\cdot)u(t,\cdot)\|_{2p}\leq M(A+M)\|\phi_N(\cdot)u(t,\cdot)\|_{2p}+CM\|\phi_N(\cdot) F_t(\cdot)\|_{2p}.
\ee
\end{theorem}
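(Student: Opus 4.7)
The natural strategy is a weighted $L^{2p}$ energy estimate on \eqref{2.0.1} rewritten as $u_t = -f'(u)u_x - F_t$. Since $u \in C^1([0,T]; H^{s-1}(\R))$ by the local well-posedness in \cite{tian-nach} and $\phi_N$ is uniformly bounded by $N$, differentiation under the integral is justified (the dominating function $\phi_N^{2p}|u|^{2p-1}|u_t|$ lies in $L^1(\R)$) and yields
$$\f{1}{2p}\f{d}{dt}\|\phi_N u\|_{2p}^{2p} = -\int_\R \phi_N^{2p} u^{2p-1} f'(u) u_x \, dx - \int_\R \phi_N^{2p} u^{2p-1} F_t \, dx.$$
The two contributions on the right are then handled separately.

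For the convective term I would introduce the primitive $H(s) := \int_0^s \sigma^{2p-1} f'(\sigma) \, d\sigma$, so that $u^{2p-1} f'(u) u_x = \p_x H(u)$, and integrate by parts. The boundary terms vanish because $\phi_N^{2p}$ is bounded by $N^{2p}$ while $H(u(t,x)) \to 0$ as $|x| \to \infty$ (a consequence of $u\in H^s$, $s>3/2$, and $H(0)=0$). Property (e) of proposition \ref{prop3.5} then provides the a.e.\ estimate $|(\phi_N^{2p})'| \leq 2pA\phi_N^{2p}$, and combining it with the bound $|H(s)| \leq (M/(2p))|s|^{2p}$, which is valid because $\sigma$ ranges over the image of $u(t,\cdot)$ where $|f'(\sigma)| \leq M$ in view of \eqref{2.0.11}, I obtain
$$\Big|\int_\R \phi_N^{2p} u^{2p-1} f'(u) u_x \, dx\Big| \leq AM\|\phi_N u\|_{2p}^{2p} \leq M(A+M)\|\phi_N u\|_{2p}^{2p}.$$
For the non-local term, Hölder's inequality with conjugate exponents $2p/(2p-1)$ and $2p$ yields the clean bound
$$\Big|\int_\R \phi_N^{2p} u^{2p-1} F_t \, dx\Big| \leq \|\phi_N u\|_{2p}^{2p-1}\|\phi_N F_t\|_{2p}.$$

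Combining both estimates and invoking the identity $\f{d}{dt}\|\phi_N u\|_{2p}^{2p} = 2p\|\phi_N u\|_{2p}^{2p-1}\f{d}{dt}\|\phi_N u\|_{2p}$, one divides through by $\|\phi_N u\|_{2p}^{2p-1}$ (or, to cover the degenerate set where this quantity vanishes, replaces $\|\phi_N u\|_{2p}$ by $(\|\phi_N u\|_{2p}^{2p}+\varepsilon)^{1/(2p)}$ and lets $\varepsilon\downarrow 0$) to produce \eqref{4.2.1}. The most delicate step is the integration by parts on the convective term, because $\phi_N$ is only locally absolutely continuous and may be large: the vanishing of the boundary contributions hinges on the uniform bound $\phi_N \leq N$ and the decay of $u$ at infinity, while the conversion of the resulting boundary-free integral into a bona fide weighted $L^{2p}$ estimate rests entirely on the tameness of $\phi_N'$ encoded in proposition \ref{prop3.5}(e). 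Without the pointwise bound $|\phi_N'|\leq A\phi_N$ a.e., the argument would not close.
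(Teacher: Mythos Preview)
Your argument is correct and follows the same overall scheme as the paper's proof: a weighted $L^{2p}$ energy identity on \eqref{2.0.1}, with the convective contribution handled by an integration by parts and the non-local contribution by H\"older's inequality. The one notable difference lies in how the convective term is integrated by parts. The paper expands $\p_x\big((\phi_N u)^{2p}f'(u)\big)$ via the product rule, which produces two pieces---one from $\phi_N'$ and one from $f''(u)u_x$---and bounds them together by $(A+M)M\|\phi_N u\|_{2p}^{2p}$; you instead introduce the primitive $H(s)=\int_0^s\sigma^{2p-1}f'(\sigma)\,d\sigma$, recognise $u^{2p-1}f'(u)u_x=\p_xH(u)$, and shift the entire derivative onto $\phi_N^{2p}$, obtaining the single and sharper bound $AM\|\phi_N u\|_{2p}^{2p}$, which you then relax to match \eqref{4.2.1}. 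Your route is marginally more economical (it never needs the bound on $f''(u)$), though both lead to the same conclusion with the same decomposition and the same use of proposition~\ref{prop3.5}(e).
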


\begin{proof}
We may assume that $u$ is non-trivial, otherwise the result is immediate.

Multiplying \eqref{2.0.1} by $(\phi_Nu)^{2p-1}\phi_N$ and integrating the result with respect to $x$, we obtain
\bb\label{4.2.2}
\f{1}{2p}\f{d}{dt}\int_\R(\phi_Nu)^{2p}dx+I_1+I_2=0,
\ee
where
$$
I_1:=\int_\R((\phi_Nu)^{2p-1}\phi_N)^{2p-1}(\phi_Nf'(u)u_x)dx,
$$
and
$$
I_2:=\int_\R((\phi_Nu)^{2p-1}\phi_NF_t(x)dx.
$$

Integrating the identity
$$
\p_x((\phi_Nu)^{2p}f'(u))=2p((\phi_Nu)^{2p-1}(\phi'_Nu+\phi_Nu_x)f'(u)+((\phi_Nu)^{2p}f''(u)u_x
$$
over $\R$, taking into account that the conditions on $f$ imply that $u,\,f(u)\rightarrow 0$, as $|x|\rightarrow\infty$; $|\phi_N'|\leq A|\phi_N|$, we arrive at the estimate
\bb\label{4.2.3}
\ba{lcl}
|I_1|&\leq&\ds{\Big|\int_\R(\phi_Nu)^{2p-1}\phi_N'uf'(u)dx\Big|+\f{1}{2p}\Big|\int_\R(\phi_Nu)^{2p}f''(u)u_xdx\Big|}\\
\\
&\leq&\ds{(A+M)M\|\phi_Nu\|_{2p}^{2p}}.
\ea
\ee

Let us now estimate $I_2$: since $u$ is a solution of \eqref{2.0.1}, the Sobolev Embedding Theorem and the invariance of the functional \eqref{2.0.10} imply $\|u(t,\cdot)\|_\infty\leq\|u(t,\cdot)\|_{H^1}=\|u_0\|_{H^1}$. Thus,
$$
|I_2|\leq cM\int_\R|\phi_Nu|^{2p-1}|\phi_NF_t|dx.
$$

From \eqref{2.0.5} we conclude that $g(u(t,\cdot))\in L^2(\R)$, since
$$
\int_\R g(u(t,x))dx\leq c\int_\R u(t,x)^2dx\leq c \|u(t,\cdot)\|_{H^1(\R)}^2.
$$

This last observation, jointly with the conditions on $u$ and $\phi$, tell us that both $\phi_Nu$ and $\phi_NF_t$ belong to $L^1(\R)\cap L^\infty(\R)$. Applying proposition \ref{prop3.2} we conclude that $\phi_Nu\in L^{\f{2p}{2p-1}}(\R)$ and $\phi_NF_t\in L^{2p}(\R)$, which we combine with the Hölder inequality to conclude that
\bb\label{4.2.4}
I_2\leq cM\|\phi_Nu\|_{2p}^{2p-1}\|\phi_NF_t\|_{2p}.
\ee

From \eqref{4.2.3}, \eqref{4.2.4}, and \eqref{4.2.2} we obtain
$$
\|\phi_Nu\|_{2p}^{2p-1}\f{d}{dt}\|\phi_Nu\|_{2p}\leq (A+M)M\|\phi_Nu\|_{2p}^{2p}+cM\|\phi_Nu\|_{2p}^{2p-1}\|\phi_N F_t\|_{2p},
$$
which implies \eqref{4.2.1}.
\end{proof}

\begin{theorem}\label{teo4.3}
If $u\in C^0([0,T];H^s(\R))$, $s>3/2$, is a solution of \eqref{2.0.1}, and $\phi$ is a function satisfying proposition \ref{prop3.6}, then
\bb\label{4.2.5}
\f{d}{dt}\|\phi_N(\cdot) u_x(t,\cdot)\|_{2p}\leq M^2(A+1)\|\phi_N(\cdot)\,u_x(t,\cdot)\|_{2p}+\|\phi_N(\cdot) \p_x F_t (\cdot)\|_{2p}.
\ee
\end{theorem}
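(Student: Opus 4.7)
The plan is to mirror the proof of Theorem \ref{teo4.2}, but now for the derivative $u_x$. First, I differentiate \eqref{2.0.1} with respect to $x$, obtaining the transport-type equation
$$u_{tx} + f''(u)u_x^2 + f'(u)u_{xx} + \p_x F_t = 0.$$
Multiplying this by $(\phi_N u_x)^{2p-1}\phi_N$ and integrating over $\R$ gives the energy identity
$$\f{1}{2p}\f{d}{dt}\|\phi_N u_x\|_{2p}^{2p} + J_1 + J_2 + J_3 = 0,$$
where $J_1$, $J_2$, $J_3$ collect the contributions of $f''(u)u_x^2$, $f'(u)u_{xx}$, and $\p_x F_t$, respectively. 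The regularity $u\in C^0([0,T];H^s(\R))$ with $s>3/2$, together with Proposition \ref{prop3.5}, guarantees that each integrand is integrable and that the time derivative can be pulled outside the integral.

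The term $J_1$ is controlled pointwise via $|f''(u)u_x|\leq M^2$, yielding $|J_1|\leq M^2\|\phi_N u_x\|_{2p}^{2p}$. The critical step is handling $u_{xx}$ in $J_2$, which I would remove by integration by parts starting from the identity
$$\p_x\bigl[(\phi_N u_x)^{2p} f'(u)\bigr] = 2p(\phi_N u_x)^{2p-1}(\phi_N' u_x + \phi_N u_{xx})f'(u) + (\phi_N u_x)^{2p} f''(u)u_x.$$
Integrating over $\R$ (the boundary contribution vanishes since $u$ and $u_x$ decay at infinity by the Sobolev Embedding Theorem, while $\phi_N$ is uniformly bounded by $N$ thanks to Proposition \ref{prop3.5}(d)) and solving for the $u_{xx}$ integral gives
$$J_2 = -\int_\R (\phi_N u_x)^{2p-1}\phi_N' u_x f'(u)\,dx - \f{1}{2p}\int_\R (\phi_N u_x)^{2p} f''(u)u_x\,dx.$$
Combining $|\phi_N'|\leq A|\phi_N|$ (Proposition \ref{prop3.5}(e)) with the uniform bounds on $f'(u)$, $f''(u)$ and $u_x$ coming from \eqref{2.0.11} then yields an estimate of the form $|J_2|\leq (AM + M^2/(2p))\|\phi_N u_x\|_{2p}^{2p}$. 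Finally, $J_3$ is estimated by Hölder's inequality with conjugate exponents $2p/(2p-1)$ and $2p$, giving $|J_3|\leq \|\phi_N u_x\|_{2p}^{2p-1}\|\phi_N\,\p_x F_t\|_{2p}$; the finiteness of the second factor follows from Theorem \ref{teo4.1}(b) combined with the boundedness of $\phi_N$.

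Assembling the three estimates, using $\f{d}{dt}\|\phi_N u_x\|_{2p}^{2p}=2p\|\phi_N u_x\|_{2p}^{2p-1}\f{d}{dt}\|\phi_N u_x\|_{2p}$, and dividing by the common factor $\|\phi_N u_x\|_{2p}^{2p-1}$ (the case of vanishing norm being trivial) delivers \eqref{4.2.5}, after absorbing the lower-order contribution $M^2/(2p)$ into $M^2(A+1)$ (which is legitimate provided one takes $M\geq 1$, as can always be arranged by enlarging the constant in \eqref{2.0.11}). The principal technical obstacle I anticipate is making the integration by parts in $J_2$ rigorous: the Sobolev regularity gives only $u_{xx}\in H^{s-2}(\R)$, so the cleanest route is to perform the manipulation for a smooth mollification $u^\eps$ of $u$ and then pass to the limit $\eps\to 0^+$ using the continuity of the Sobolev norm and the algebra property of $H^s(\R)$ with $s>3/2$.
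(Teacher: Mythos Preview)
Your proposal is correct and follows essentially the same route as the paper: differentiate \eqref{2.0.1} in $x$, multiply by $\phi_N(\phi_N u_x)^{2p-1}$, integrate, and handle $J_2$ via the same integration-by-parts identity for $\p_x\bigl[(\phi_N u_x)^{2p}f'(u)\bigr]$. Your treatment of the constants is in fact slightly sharper than the paper's (you keep the $M^2/(2p)$ factor), and the paper itself arrives at $M(A+2M)$ rather than the stated $M^2(A+1)$, so the exact constant in \eqref{4.2.5} is not the point---only the linear differential inequality matters for the downstream Gr\"onwall argument.
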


\begin{proof}
Let $p\geq1$. Differentiating \eqref{2.0.1} with respect to $x$, multiplying the result by $\phi_N(\phi_Nu_x)^{2p-1}$, and integrating over $\R$, we obtain,
\bb\label{4.2.6}
\f{1}{2p}\f{d}{dt}\int_\R(\phi_Nu_x)^{2p}dx+J_1+J_2+J_3=0,
\ee
where
$$
J_1:=\int_\R(\phi_Nu_x)^{2p}f''(u)u_xdx,\quad J_2:=\int_\R(\phi_Nu_x)^{2p-1}f'(u)\phi_Nu_{xx}dx,
$$
and
$$
J_3:=\int_\R(\phi_Nu_x)^{2p-1}\phi_N\p_xF_t(x)dx.
$$

Using the identity
$$\p_x(f'(u)(\phi_Nu_x)^{2p})=f''(u)(\phi_Nu_x)^{2p}u_x+2pf''(u)(\phi_Nu_x)^{2p-1}(\phi_N'u_x+\phi_Nu_{xx}),$$
the fact that $|f'(u)|,|f''(u)|\leq M$, and $|\phi'_N|\leq A|\phi_N|$, we conclude that
\bb\label{4.2.7}
|J_2|\leq M(A+M)\|\phi_Nu_x\|_{2p}^{2p}.
\ee

In a more straightforward way, we also obtain
\bb\label{4.2.8}
|J_1|\leq M^2\|\phi_Nu_x\|_{2p}^{2p}.
\ee

To estimate $J_3$ we note that $u_x\in L^2(\R)\cap L^\infty(\R)$, which jointly with $g(u(t,\cdot))\in L^2(\R)$ say that $\p_x F_t=\Lambda^{-2}h_t-h_t\in L^1(\R)\cap L^\infty(\R)$. Therefore,
\bb\label{4.2.9}
|J_3|\leq\|\phi_Nu_x\|_{2p}^{2p-1}\|\phi_N\p_x F_t\|_{2p}.
\ee

From \eqref{4.2.7}--\eqref{4.2.9} and \eqref{4.2.6} we obtain
$$
\|\phi_Nu_x\|_{2p}^{2p-1}\f{d}{dt}\|\phi_Nu_x\|_{2p}\leq M(A+2M)\|\phi_Nu_x\|_{2p}^{2p}+\|\phi_Nu_x\|_{2p}^{2p-1}\|\phi_N\p_x F_t\|_{2p}.
$$

If $u\equiv0$, then \eqref{4.2.5} is trivially satisfied, otherwise, it is straightforwardly implied by the inequality above.
\end{proof}

\begin{theorem}\label{teo4.4}
Assume that ${\bf H_1}-{\bf H_3}$ hold, $\phi$ is an admissible weight function for \eqref{2.0.1}, and $u\in C^0([0,T];H^s(\R))$, $s>3/2$, is a solution of \eqref{2.0.1}. Then there exists a constant $c>0$, independent of $t$, such that 
$$\max\{\|\phi_N(\cdot)F_t(\cdot)\|_{2p},\|\phi_N(\cdot) \p_x F_t(\cdot)\|_{2p}\}<c\|\phi_N(\cdot) h_t(\cdot)\|_{2p}.$$
\end{theorem}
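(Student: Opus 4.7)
\medskip

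\noindent\textbf{Proof plan for Theorem \ref{teo4.4}.} The plan is to exploit the fact that both $F_t$ and $\partial_x F_t$ are, up to a lower order term, convolutions of $h_t$ against kernels which are integrable against the sub-multiplicative weight $v$ associated to $\phi$, and then invoke the weighted Young inequality (Lemma \ref{lema3.2}) together with the uniform $v$-moderateness of $(\phi_N)_{N\in\N}$ (Proposition \ref{prop3.6}).

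First I would rewrite the two quantities explicitly in terms of the kernel $p(x)=e^{-|x|}/2$. Using $\p_x\Lambda^{-2}h_t=(\p_x p)\ast h_t$ and the identity $\p_x^2\Lambda^{-2}=\Lambda^{-2}-1$, one has
\bb\label{4.2.x1}
F_t=(\p_x p)\ast h_t=-\operatorname{sgn}(\cdot)\,p\ast h_t,\qquad \p_x F_t=p\ast h_t-h_t.
\ee
In particular $|\p_x p(x)|=p(x)$ pointwise, so both kernels appearing in the convolutions are bounded above by $p$.

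Next I would check that the convolution kernels are integrable against $v$. Since $\phi$ is admissible, $e^{-|\cdot|}v(\cdot)\in L^1(\R)$, so $\|pv\|_1=\tfrac12\|e^{-|\cdot|}v\|_1<\infty$ and similarly $\|(\p_x p)v\|_1\leq\|pv\|_1<\infty$. Proposition \ref{prop3.6} provides a constant $c_1>0$, independent of $N$, with $\phi_N(x+y)\leq c_1 v(x)\phi_N(y)$ for all $x,y\in\R$, so Lemma \ref{lema3.2} (the weighted Young inequality) applies uniformly in $N$ with the exponent $2p\in[2,\infty]$:
\bb\label{4.2.x2}
\|\phi_N(p\ast h_t)\|_{2p}\leq c_1\|pv\|_1\,\|\phi_N h_t\|_{2p},\qquad \|\phi_N((\p_x p)\ast h_t)\|_{2p}\leq c_1\|(\p_x p)v\|_1\,\|\phi_N h_t\|_{2p}.
\ee

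Combining \eqref{4.2.x1} and \eqref{4.2.x2} I would conclude
\bb\label{4.2.x3}
\|\phi_N F_t\|_{2p}\leq c_1\|pv\|_1\,\|\phi_N h_t\|_{2p},\qquad \|\phi_N\p_x F_t\|_{2p}\leq \bigl(c_1\|pv\|_1+1\bigr)\,\|\phi_N h_t\|_{2p},
\ee
so choosing $c=c_1\|pv\|_1+1$ proves the theorem. None of the constants depend on $t$ or $N$. The only mild subtlety is verifying the hypotheses of Lemma \ref{lema3.2} in $L^{2p}$ with $p\in[1,\infty]$ (so $2p\in[2,\infty]$); this is exactly the range in which that lemma was stated, and $\phi_N h_t$ belongs to $L^{2p}(\R)$ because $\phi_N\in L^\infty$ and $h_t\in L^1\cap L^\infty$ (the latter was already observed in the proofs of Theorems \ref{teo4.2} and \ref{teo4.3} using $g(u)\leq cu^2$ and $u_x\in L^2\cap L^\infty$). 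I do not foresee a real obstacle beyond bookkeeping: the argument is essentially a direct application of the weighted Young inequality to the two explicit convolution representations.
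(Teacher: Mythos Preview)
Your proof is correct and follows essentially the same route as the paper: write $F_t=(\p_x p)\ast h_t$ and $\p_x F_t=p\ast h_t-h_t$, then apply the weighted Young inequality (Lemma~\ref{lema3.2}) with the kernel bounds $\|(\p_x p)v\|_1\leq\|pv\|_1=\tfrac12\|e^{-|\cdot|}v\|_1<\infty$ to obtain $c=c_1\|pv\|_1+1$. Your explicit appeal to Proposition~\ref{prop3.6} for the uniform (in $N$) moderateness constant $c_1$ is in fact slightly cleaner than the paper's presentation; the only minor quibble is that your closing remark invokes $g(u)\leq cu^2$, which is not among the hypotheses of Theorem~\ref{teo4.4}---but this is harmless, since $h_t\in H^{s-1}(\R)\subset L^2(\R)\cap L^\infty(\R)$ by Theorem~\ref{teo4.1}(a) already gives $\phi_N h_t\in L^{2p}(\R)$.
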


\begin{proof} Given that $\phi$ is an admissible weight function for \eqref{2.0.1} (see definition \ref{def2.1}), we conclude the existence of $A>0$ and a continuous function $v$, with $\inf_Rv>0$, such that $e^{-|\cdot|}v\in L^{1}(\R)$ and $|\phi'(x)|\leq A |\phi(x)|$. On the other hand, $F_t(x)=(\p_xp)\ast h_t$, where $p$ is given by \eqref{2.0.2}, and $\p_x F_t=(p\ast h_t)-h_t$. By Lemma \ref{lema3.2}, we have
\bb\label{4.2.10}
\ba{lcl}
\|\phi_NF_t\|_{2p}&=&\|\phi_N((\p_xp)\ast h_t)\|_{2p}\leq c_0\|(\p_xp) v\|_1\|\phi_N h_t\|_{2p},\\
\\
\|\phi_N\p_x F_t\|_{2p}&\leq&\|\phi_N(p\ast h_t)\|_{2p}+\|\phi_Nh_t\|_{2p}\leq \|p\phi_N\|_1\|\phi_N h_t\|_{2p}+\|\phi_Nh_t\|_{2p},
\ea
\ee
noticing that $e^{-|\cdot|}v\in L^1(\R)$, then $\|(\p_xp) v\|_1\leq\|pv\|_1\leq\|e^{-|\cdot|}v\|_1=:a$, for some positive constant $a$. On the other hand,
$$
\ba{lcl}
\|\phi_N \p_x F_t\|_{2p}&\leq&\ds{ \|\phi_N(p\ast h_t)\|_{2p}+\|\phi_Nh_t\|_{2p}\leq (c_0\|pv\|_1+1)\|\phi_N h_t\|_{2p}}\\
\\
&\leq&\ds{ (ac_0+1)\|\phi_N h_t\|_{2p}},
\ea
$$
for some $c_0>0$. The result follows by taking $c=ac_0+1$.

\end{proof}

\begin{theorem}\label{teo4.5}
Under the conditions in theorem \ref{teo4.4}, there exits a constant $c>0$, independent of $t$, such that
$$\|\phi_N(\cdot) h_t(\cdot)\|_{2p}\leq c\Big(\|\phi_N(\cdot) u(t,\cdot)\|_{2p}+\|\phi_N(\cdot) u_x(t,\cdot)\|_{2p}\Big).$$
\end{theorem}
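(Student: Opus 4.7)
The plan is to reduce the estimate to pointwise bounds on the two summands of $h_t(x)=g(u(t,x))+\tfrac{1}{2}f''(u(t,x))u_x(t,x)^2$, then multiply by $\phi_N$ and take the $L^{2p}$ norm. Since both $|u(t,x)|$ and $|u_x(t,x)|$ are bounded by the constant $M$ of \eqref{2.0.11} uniformly in $(t,x)\in[0,T]\times\R$, everything reduces to a linearisation argument at the origin.

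First, I would exploit hypothesis ${\bf H}_3$, which forces $g(0)=0$. Combined with ${\bf H}_1$ (so that $g\in C^\infty(\R)$), the Taylor expansion $g(y)=y\int_0^1 g'(sy)\,ds$ together with the pointwise bound $|u(t,x)|\leq M$ yields
$$
|g(u(t,x))|\leq C_1\,|u(t,x)|,\qquad C_1:=\sup_{|y|\leq M}|g'(y)|,
$$
where $C_1$ depends only on $g$ and $M$, hence not on $t$. For the quadratic term, the bounds $|f''(u(t,x))|\leq M$ and $|u_x(t,x)|\leq M$ from \eqref{2.0.11} give
$$
\Big|\f{f''(u(t,x))}{2}u_x(t,x)^2\Big|\leq \f{M^2}{2}\,|u_x(t,x)|
$$
pointwise. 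Multiplying each of these by $\phi_N(x)\geq 0$ and using the triangle inequality yields
$$
|\phi_N(x)h_t(x)|\leq C_1\,|\phi_N(x)u(t,x)|+\f{M^2}{2}\,|\phi_N(x)u_x(t,x)|.
$$

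Taking the $L^{2p}(\R)$ norm of this pointwise inequality and applying Minkowski's inequality gives
$$
\|\phi_N\,h_t\|_{2p}\leq C_1\,\|\phi_N\,u(t,\cdot)\|_{2p}+\f{M^2}{2}\,\|\phi_N\,u_x(t,\cdot)\|_{2p},
$$
and the conclusion follows with $c:=\max\{C_1,\,M^2/2\}$, which depends on $f$, $g$, $M$, and $T$ but not on $t$ or $N$. There is no real obstacle here: the only conceptual point is that ${\bf H}_3$ is used exclusively through $g(0)=0$, which is precisely what allows the linear bound $|g(u)|\leq C_1|u|$ rather than the merely quadratic bound \eqref{2.0.5} (the latter would be too weak to deliver a linear combination of $\|\phi_N u\|_{2p}$ and $\|\phi_N u_x\|_{2p}$ on the right-hand side, at least without absorbing a further factor of $\|u\|_\infty$; happily, absorbing $\|u\|_\infty\leq M$ is exactly what the $C_1$ constant already does).
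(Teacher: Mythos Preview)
Your proof is correct and follows essentially the same route as the paper: both use $g(0)=0$ together with the boundedness of $u$ to obtain a linear bound $|g(u)|\leq k|u|$, and both bound $\tfrac{1}{2}|f''(u)|u_x^2\leq \tfrac{M^2}{2}|u_x|$. The only difference is cosmetic: you apply Minkowski's inequality directly to the pointwise bound, whereas the paper first raises to the $2p$-th power and uses a $\max$-trick to split $(|u|+|u_x|)^{2p}$; your version is slightly cleaner.
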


\begin{proof} The conditions on $g$ and $u$ guarantee the existence of a constant $k>0$ such that $|g(u)|\leq k |u|$. On the other hand, we have
$$
|\phi_Nh_t|=\Big|\phi_N\Big(g(u)+\f{f''(u)}{2}u_x^2\Big)\Big|\leq \phi_N\Big(k|u|+\f{M^2}{2}|u_x|\Big),
$$
where we used that $|f''(u)|,\,|u_x|\leq M$. Therefore, for some $\kappa>0$, we have
$$
\ba{lcl}
|\phi_Nh_t|^{2p}&\leq&\ds{\kappa^{2p}|\phi_N|^{2p}(|u|+|u_x|)^{2p}\leq \kappa^{2p}|\phi_N|^{2p}(2\max\{|u|,|u_x|\})^{2p}}\\
\\
&\leq &\ds{K|\phi_N|^{2p}(|u|^{2p}+|u_x|^{2p})},
\ea
$$
for some $K>0$. Thus, 
$$
\ba{lcl}\|\phi_Nh_t\|_{2p}^{2p}&\leq& K(\|\phi_Nu\|_{2p}^{2p}+\|\phi_Nu_x\|_{2p}^{2p})\leq 2K(\max\{\|\phi_Nu\|_{2p},\|\phi_Nu_x\|_{2p}\})^{2p}\\
\\
&\leq& 2K\Big(\|\phi_Nu\|_{2p}+\|\phi_Nu_x\|_{2p}\Big)^{2p},
\ea
$$
which proves the result.
\end{proof}

{\bf Proof of theorem \ref{teo2.4}.} Let $\eta_N(t):=\|\phi_N(\cdot)u(t,\cdot)\|_{2p}+\|\phi_N(\cdot)u_x(t,\cdot)\|_{2p}$. By theorems \ref{teo4.2}--\ref{teo4.4}, we have
\bb\label{4.2.11}
\f{d}{dt}\eta_N\leq k\eta_N,
\ee
for some constant $k>0$ depending on $M$, $A$, and $c$. Therefore, for a suitable constant $\kappa$ depending on both $k$ and $T$, the Grönwall inequality implies
$$
\|\phi_N(\cdot)u(t,\cdot)\|_{2p}+\|\phi_N(\cdot)u_x(t,\cdot)\|_{2p}\leq\kappa\Big(\|\phi_N(\cdot)u_0(\cdot)\|_{2p}+\|\phi_N(\cdot)u_0'(\cdot)\|_{2p}\Big).
$$

Since the constants do not depend on $N$, we can take $N\rightarrow\infty$ and obtain 
\bb\label{4.2.12}
\|\phi(\cdot)u(t,\cdot)\|_{2p}+\|\phi(\cdot)u_x(t,\cdot)\|_{2p}\leq\kappa\Big(\|\phi(\cdot)u_0(\cdot)\|_{2p}+\|\phi(\cdot)u_0'(\cdot)\|_{2p}\Big).
\ee

The inequality \eqref{4.2.12} holds for any $1\leq p<\infty$ and the constant $\kappa$ does not depend on it, so that we can take the limit $p\rightarrow\infty$ to conclude its validity for $1\leq p\leq\infty$. \hfill$\square$

\subsection{Proof of theorem \ref{teo2.5}}\label{sec4.3}

\begin{theorem}\label{teo4.6}
Let $\phi:\R\rightarrow\R$ be a locally absolutely continuous function, such that $|\phi'(x)|\leq A |\phi(x)|$ a.e., for some $A>0$, $v-$moderate for some continuous, sub-multiplicative weight function $v$ satisfying both $\inf_\R v>0$ and $e^{-|\cdot|}v(\cdot)\in L^p(\R)$, $2\leq p\leq \infty$. If $u_0\in H^s(\R)$, $s>3/2$, satisfies \eqref{2.0.13}, then the corresponding solution $u=u(t,x)$ of \eqref{2.0.1}, with initial condition $u(0,x)=u_0(x)$, inherits the same property, more precisely,
    $$\sup_{t\in[0,T]}\Big(\|\phi(\cdot)^{\f{1}{2}}u(t,\cdot)\|_2+\|\phi(\cdot)^{\f{1}{2}}u_x(t,\cdot)\|_2\Big)<\infty.$$
\end{theorem}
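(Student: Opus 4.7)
The plan is to reduce the claim to a direct application of Theorem~\ref{teo2.4}, with the weight there replaced by $\phi^{1/2}$ and the exponent $p$ taken equal to $2$. Once I verify that $\phi^{1/2}$ is itself an admissible weight function for \eqref{2.0.1} in the sense of Definition~\ref{def2.1}, with companion sub-multiplicative weight $v^{1/2}$, the estimate \eqref{2.0.12} applied to $\phi^{1/2}$ immediately yields
\bb\label{planeq1}
\|\phi(\cdot)^{1/2}u(t,\cdot)\|_2+\|\phi(\cdot)^{1/2}u_x(t,\cdot)\|_2\leq\kappa\Big(\|\phi(\cdot)^{1/2}u_0(\cdot)\|_2+\|\phi(\cdot)^{1/2}u_0'(\cdot)\|_2\Big),
\ee
with $\kappa$ independent of $t\in[0,T]$. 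The right-hand side is finite by hypothesis \eqref{2.0.13}, so taking the supremum over $t\in[0,T]$ delivers the desired bound.

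The routine parts of the verification go as follows. Because $\phi$ is continuous and strictly positive, on every compact interval it is bounded below by a positive constant; this implies that $\phi^{1/2}$ is locally absolutely continuous. At points where $\phi'$ exists we have $(\phi^{1/2})'=\phi'/(2\phi^{1/2})$, whence $|(\phi^{1/2})'|\leq(A/2)\phi^{1/2}$ almost everywhere. Taking square roots in $v(x+y)\leq v(x)v(y)$ shows that $v^{1/2}$ is sub-multiplicative, and taking square roots in $\phi(x+y)\leq c_0v(x)\phi(y)$ shows that $\phi^{1/2}$ is $v^{1/2}$-moderate with constant $c_0^{1/2}$. Moreover $v^{1/2}$ is continuous and $\inf_\R v^{1/2}=(\inf_\R v)^{1/2}>0$.

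The genuinely non-trivial item in Definition~\ref{def2.1}, which I expect to be the main obstacle, is the integrability $e^{-|\cdot|}v^{1/2}\in L^1(\R)$; the hypothesis only furnishes $e^{-|\cdot|}v\in L^p(\R)$ for some $p\in[2,\infty]$. I would establish it by combining a rescaling with sub-multiplicativity: the change of variable $x=2y$ together with $v(2y)\leq v(y)^2$ gives
\bb
\int_\R e^{-|x|}v^{1/2}(x)\,dx=2\int_\R e^{-2|y|}v^{1/2}(2y)\,dy\leq 2\int_\R e^{-2|y|}v(y)\,dy,
\ee
and H\"older's inequality with the conjugate exponent $p'\in[1,2]$ of $p$ controls the last integral by $2\|e^{-|\cdot|}\|_{p'}\|e^{-|\cdot|}v\|_p<\infty$. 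With the admissibility of $\phi^{1/2}$ in hand, Theorem~\ref{teo2.4} supplies \eqref{planeq1}, which is the conclusion of the theorem.
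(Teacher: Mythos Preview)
Your proof is correct and follows essentially the same strategy as the paper: verify that $\phi^{1/2}$ is an admissible weight in the sense of Definition~\ref{def2.1} with companion weight $v^{1/2}$, and then invoke Theorem~\ref{teo2.4} with $p=2$. The only difference is in the check that $e^{-|\cdot|}v^{1/2}\in L^1(\R)$: the paper factors $e^{-|x|}v^{1/2}(x)=e^{-|x|/2}\cdot\big(e^{-|x|/2}v^{1/2}(x)\big)$ and applies H\"older directly (using that $e^{-|\cdot|}v\in L^p$ gives $e^{-|\cdot|/2}v^{1/2}\in L^{2p}$), whereas you first exploit sub-multiplicativity via the change of variable $x=2y$; both arguments are valid and equally short.
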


We would like to note that any admissible weight function for the equation \eqref{2.0.1} satisfies the conditions in the theorem \ref{teo4.6} in view proposition \ref{prop3.4}, see also \cite[page 5172]{bran-imrn}, meaning that the result above can be applied to any function in the sense of definition \ref{def2.1}.

In addition, it is worth noticing that if, instead of $f\in L^1(\R)$ in proposition \ref{prop3.3}, we assume $f\in L^q(\R)$, for some $q\in(1,\infty)$, (and maintain all the remaining conditions) then the conclusion of that proposition would still  be true for some $r$ and $p$ sufficiently large satisfying $q\leq r<p$. While in that result the constant $L_\epsilon$ was controlled by $\|f\|_1$, with this new hypothesis it is now controlled by $\|f\|_q$. As a result, we again have $\|f\|_p\rightarrow\|f\|_\infty$ as $p\rightarrow\infty$.

\begin{proof}
We begin with by noticing that $|(\phi^{1/2})'|=(1/2)|\phi^{-1/2}\phi'|\leq (A/2)|\phi^{1/2}|$, that is, $\phi^{1/2}$ is a $v^{1/2}-$moderate weight. Therefrom $e^{-|\cdot|}v^{1/2}\in L^{2p}(\R)$ and whenever 
$$\f{1}{2p}+\f{1}{q}=1$$
is satisfied, the Hölder inequality tell us that
$$\|e^{-|\cdot|}v^\f{1}{2}\|_{1}\leq\|e^{-\f{|\cdot|}{2}}v^\f{1}{2}\|_{2p}\|e^{-\f{|\cdot|}{2}}\|_q,$$
wherefrom we conclude that $e^{-|\cdot|}v^\f{1}{2}\in L^1(\R)$ and the result follows from theorem \ref{teo2.4} with $p=2$ and $\phi$ replaced by $\phi^{\f{1}{2}}$.
\end{proof}

\begin{theorem}\label{teo4.7}
Suppose ${\bf H_1}-{\bf H_3}$ hold, $u$, $u_0$, and $\phi$ satisfy the conditions in theorem \ref{teo4.6}, and $g(u(t,x))\leq c u(t,x)^2$, for some constant $c>0$. Then there exists a constant $c'>0$, depending only on $\phi$ and the initial datum, such that 
$$
\|\phi_N(\cdot)h_t(\cdot)\|_1<c',\,\,\,\,\|\phi_N(\cdot) F_t(\cdot)\|_p<c',
$$
and
$$|\phi_N(\cdot)\p_xF_t(\cdot)\|_p\leq c' +M(\|\phi_N(\cdot)u(t,\cdot)\|_p+\|\phi_N(\cdot)u_x(t,\cdot)\|_p).
$$
\end{theorem}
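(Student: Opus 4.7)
The plan is to exploit the quadratic dominance of $h_t$ by $u$ and $u_x$, together with the uniform weighted $L^2$-bound supplied by theorem \ref{teo4.6}, and then transfer these controls through the convolution operators appearing in $F_t$ and $\p_x F_t$ by means of the uniform $v$-moderation of the truncated weights $(\phi_N)_{N\in\N}$ provided by proposition \ref{prop3.6}.

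First I would establish the $L^1$-bound. The hypothesis $g(u(t,x))\leq c\,u(t,x)^2$ combined with $|f''(u(t,x))|\leq M$ from \eqref{2.0.11} yields the pointwise estimate
\begin{equation*}
|h_t(x)|\leq c\,u(t,x)^2+\frac{M}{2}\,u_x(t,x)^2.
\end{equation*}
Multiplying by $\phi_N$ and integrating,
\begin{equation*}
\|\phi_N h_t\|_1\leq c\,\|\phi_N^{1/2}\,u(t,\cdot)\|_2^2+\frac{M}{2}\,\|\phi_N^{1/2}\,u_x(t,\cdot)\|_2^2;
\end{equation*}
since $\phi_N^{1/2}\leq\phi^{1/2}$ by proposition \ref{prop3.5}(b), theorem \ref{teo4.6} bounds the right-hand side uniformly in $t\in[0,T]$ and $N\in\N$, providing the required $c'$.

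Next I would control $F_t=(\p_xp)\ast h_t$ and $\p_xF_t=p\ast h_t-h_t$. The key observation is $|\p_x p|=p$, so $|F_t(x)|\leq (p\ast|h_t|)(x)$. By proposition \ref{prop3.6}, there exists $c_1>0$ (independent of $N$) with $\phi_N(x)\leq c_1 v(x-y)\,\phi_N(y)$; inserting this inside the convolution integral,
\begin{equation*}
\phi_N(x)\,|F_t(x)|\leq c_1\bigl((v p)\ast(\phi_N|h_t|)\bigr)(x).
\end{equation*}
Applying Young's inequality in the $L^p\times L^1\to L^p$ scaling produces
\begin{equation*}
\|\phi_N F_t\|_p\leq c_1\|v p\|_p\,\|\phi_N h_t\|_1,
\end{equation*}
which is finite because $e^{-|\cdot|}v\in L^p(\R)$ by hypothesis, and the previous step delivers $\|\phi_N F_t\|_p<c'$. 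The same reasoning applied to $p\ast h_t$ in place of $(\p_xp)\ast h_t$ controls the convolution part of $\p_xF_t$; for the remaining $-h_t$ term, the uniform bounds $|u|,|u_x|\leq M$ promote the pointwise estimate to $|h_t(x)|\leq cM|u(t,x)|+(M^2/2)|u_x(t,x)|$, whence
\begin{equation*}
\|\phi_N h_t\|_p\leq CM\bigl(\|\phi_N u(t,\cdot)\|_p+\|\phi_N u_x(t,\cdot)\|_p\bigr),
\end{equation*}
and combining gives the third inequality after enlarging $c'$ if necessary.

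The main technical subtlety is that Lemma \ref{lema3.2} places $v$ on the kernel and $\phi$ on the $L^p$-factor, producing $L^1\times L^p\to L^p$ scaling in the wrong direction for our purposes: here we have an $L^1$-bound on $\phi_N h_t$ but only an $L^p$-bound on the kernel $vp$. The workaround is to invoke $v$-moderation pointwise inside the convolution integral to relocate $\phi_N(x)$ onto the $h_t$ factor, after which plain unweighted Young's inequality suffices. Tracking the uniformity of $c_1$ in $N$ through proposition \ref{prop3.6} is essential so that the resulting estimates survive the $N\to\infty$ passage used in the subsequent proof of theorem \ref{teo2.5}.
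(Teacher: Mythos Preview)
Your proof is correct and follows essentially the same route as the paper: bound $\|\phi_N h_t\|_1$ via the quadratic estimate and theorem \ref{teo4.6}, push the convolution estimates for $F_t$ and $p\ast h_t$ through the $v$-moderation of $\phi_N$, and handle the residual $\|\phi_N h_t\|_p$ via the pointwise bound (which is exactly the content of theorem \ref{teo4.5}, which the paper invokes directly). Your explicit treatment of the Young-inequality scaling is in fact more careful than the paper's: the paper cites lemma \ref{lema3.2} for the estimate $\|\phi_N F_t\|_p\leq c_0\|\phi_N h_t\|_1\|(\p_xp)v\|_p$, but as you observe, the lemma as stated gives the $L^1\times L^p$ scaling with $v$ on the $L^1$ factor, whereas here one needs $vp\in L^p$ paired with $\phi_N h_t\in L^1$; your pointwise insertion of $\phi_N(x)\leq c_1 v(x-y)\phi_N(y)$ followed by ordinary Young's inequality is the clean way to justify this swap.
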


\begin{proof}
Noticing that $\phi_N\leq\phi$, we have
\bb\label{4.3.1}
\ba{lcl}
\|\phi_Nh_t\|_1 & \leq & \ds{\int_\R\phi_N\Big(|g(u)|+\f{|f''(u)|}{2}u_x^2\Big) dx}\\
\\
&\leq&\ds{\max\{c,\f{M}{2}\}\int_\R\Big((\phi^\f{1}{2}u)^2+(\phi^\f{1}{2}u_x)^2\Big)dx}\\
\\
&= & \ds{\max\{c,\f{M}{2}\}\Big(\|\phi^{\f{1}{2}}u(t,\cdot)\|_2^2+\|\phi^{\f{1}{2}}u_x(t,\cdot)\|_2^2\Big).}
\ea
\ee

By Lemma \ref{lema3.2} and taking into account that $\p_xp(\cdot)=\sign{(\cdot)}e^{-|\cdot|}/2$, we have
\bb\label{4.3.2}
\|\phi_NF_t\|_p=\|((\p_xp)\ast h_t)\phi_N\|_p\leq c_0\|\phi_N h_t\|_1\|(\p_xp)v\|_p\leq \f{c_0\|e^{-|\cdot|}v\|_p}{2}\|\phi_N h_t\|_1.
\ee

Finally, noticing that $\p_x F_t=p\ast h_t-h_t$, and applying once again lemma \ref{lema3.2}, we have
\bb\label{4.3.3}
\|\phi_N\p_xF_t\|_p\leq\f{1}{2}\|e^{-|\cdot|}v\|_p\|\phi_Nh_t\|_1+\|\phi_Nh_t\|_p.
\ee

By theorem \ref{teo4.6} we see that the right hand side of \eqref{4.3.1} can be bounded by a constant depending on the initial datum, $T>0$, and $\phi$. The same argument also applies to \eqref{4.3.2}, once the fact that $e^{-|\cdot|}v\in L^p(\R)$ is taken into account, we conclude that \eqref{4.3.2} is also bounded. Therefore, we can find a constant $\sigma$ fulfilling the thesis in the theorem, whereas its remaining part is a consequence of theorem \ref{teo4.5}.
\end{proof}

{\bf Proof of theorem \ref{teo2.5}.} Inequality \eqref{2.0.15} is an immediate consequence of theorem \ref{teo4.6}.

Let us now define 
$$\eta_N(t):=\|\phi_N(\cdot)u(t,\cdot)\|_{2p}+\|\phi_N(\cdot)u_x(t,\cdot)\|_{2p}.$$

Proceeding similarly as in the proof of theorem \ref{teo2.4}, we have
$$
\f{d}{dt}\eta_N(t)\leq k\Big(\eta_N(t)+\|\phi_N(\cdot)F_t(\cdot)\|_{2p}+\|\phi_N(\cdot)\p_x F_t(\cdot)\|_{2p}\Big)\leq k(\eta_N(t)+c),
$$
where we used theorems \ref{teo4.2}--\ref{teo4.4} and \ref{teo4.7} to conclude the existence of the constants $k$ and $c$, independent on $p$, $t$ and $N$.

In view of Grönwall's inequality, we obtain
$$
\eta_N(t)\leq \eta_N(0)e^{kT}+\f{ck}{e^{kT}-1}.
$$

Given that the constants are independent of $N$, we can make $N\rightarrow \infty$ and then arrive at
$$
\|\phi(\cdot)u(t,\cdot)\|_{2p}+\|\phi(\cdot)u_x(t,\cdot)\|_{2p}\leq \Big(\|\phi(\cdot)u_0(\cdot)\|_{2p}+\|\phi(\cdot)u_0'(\cdot)\|_{2p}\Big)e^{kT}+\f{c}{k}(e^{kT}-1),
$$
which proves the result for $p\in[1,\infty)$. Since the constants involved are independent of $p$ we can consider $p\rightarrow\infty$ and conclude the demonstration.\hfill$\square$

\subsection{Proof of theorem \ref{teo2.3} and \ref{teo2.2}}\label{sec4.4}

Let $u\in C^0([0,T];H^s(\R))$, $s>3/2$ be a solution of \eqref{2.0.1}, $h_t(\cdot)$ the function given in \eqref{2.0.3}, and let us consider the quantity
$$
\al(t):=\int_\R e^{|x|}h_t(x)dx.
$$

If \eqref{2.0.5} is satisfied, then
$$
0\leq \al(t) \leq \max\{c,\f{M}{2}\}\int_\R\Big((e^{\f{|x|}{2}}u(t,x))^2+(e^{\f{|x|}{2}}u_x(t,x))^2\Big)dx
$$

By theorem \ref{teo4.6}, with $\phi(x)=e^{|x|}$, we conclude that the last integral above is well defined. Therefore, the quantity $\al(\cdot)$ above is a well defined continuous function and, as such, it is uniformly bounded on the compact set $[0,T]$. 

For each $t\in[0,T]$ we define
\bb\label{4.4.1}
\lambda_\pm(t)=\f{1}{2}\int_\R e^{\pm y}\sigma(t,y)dy,
\ee
where $\sigma$ is the function given in theorem \ref{teo4.1}. 

A simple calculation shows that
$$
0\leq \lambda_\pm(t)\leq\f{1}{2t}\int_0^t\al(\tau)d\tau,\,\,t>0,
$$
and proceeding similarly as in the proof of theorem \ref{teo4.1}, we also have $0\leq\lambda_\pm(0)\leq\al(0)/2$.

Therefore, $\lambda_\pm(\cdot)$ is uniformly bounded on the compact set $[0,T]$, that is, there are some non-negative constants $c_1$ and $c_2$ such that
\bb\label{4.4.2}
0\leq c_1\leq\lambda_\pm(t)\leq c_2.
\ee

Moreover,
\bb\label{4.4.3}
\ba{lcl}
\ds{-\int_0^t F_\tau(x)d\tau}&=&\ds{\f{1}{2}\int_0^t\int_\R\sign{(x-y)}e^{-|x-y|}h_\tau(y)dyd\tau}\\
\\
&=&\ds{\f{1}{2}\int_\R\sign{(x-y)}e^{-|x-y|}\Big(\underbrace{\int_0^th_\tau(y)dy}_{t\sigma(t,y)}d\tau\Big)dy dy}\\
\\
&=&\ds{\f{t}{2}e^{-x}\int_{-\infty}^xe^y\sigma(t,y)dy-\f{t}{2}e^{x}\int^{\infty}_xe^{-y}\sigma(t,y)dy}.
\ea
\ee

For $x\gg 1$, we have
\bb\label{4.4.4}
\ba{lcl}
\ds{-\int_0^t F_\tau(x)d\tau}&=&\ds{te^{-x}\Big[\f{1}{2}\int_{-\infty}^x e^y\sigma(t,y)dy-\f{e^{2x}}{2}\int_{-\infty}^x e^{-y}\sigma(t,y)dy\Big]}\\
\\
&=&\ds{t e^{-x}\Big[\f{1}{2}\Big(\int_{-\infty}^x+\int_x^\infty\Big)e^y\sigma(t,y)dy\Big]}\\
\\
&&\ds{-t e^{-x}\Big[\f{1}{2}\int_x^\infty e^y\sigma(t,y)dy+\f{e^{2x}}{2}\int_x^\infty e^{-y}\sigma(t,y)dy\Big]}\\
\\
&=&\ds{t e^{-x}(\lambda_+(t)+\epsilon_+(t,x)).
}
\ea
\ee

A similar procedure shows, for $x\ll1$, that
\bb\label{4.4.5}
-\int_0^t F_\tau(x)d\tau=t e^{x}(-\lambda_-(t)-\epsilon_-(t,x)),
\ee
where $\lambda_\pm$ is given by \eqref{4.4.1} and
\bb\label{4.4.6}
\varepsilon_\pm(t,x)=\f{1}{2}\int^x_{\pm\infty} (e^{\pm y}+e^{\pm(2x-y)})\sigma(t,y)dy.
\ee

{\bf Proof of theorem \ref{teo2.3}.} Integrating \eqref{2.0.1} and using \eqref{4.4.1}, \eqref{4.4.3}--\eqref{4.4.6}, we have
\bb\label{4.4.7}
u(t,x)=u_0(x)-\int_0^t f'(u(\tau,x))u_x(\tau,x)d\tau \pm te^{-| x|}(\lambda_\pm(t)+\varepsilon_\pm(t,x)),
\ee
where $+$ and $-$ are taken in agreement with the sign of $x$.

Since $u$ is bounded, $f\in C^\infty(\R)$, and $f'(0)=0$, we conclude that $f'$ is locally Lipschitz, meaning that $|f'(u)|\leq c|u(t,x)|$, for some constant $c>0$ depending on $\|u(0,x)\|_{H^1(\R)}$. Since $u$ vanishes as $|x|\rightarrow\infty$, then $|f'(u)|\sim O(u)$ as $|x|\rightarrow\infty$, then \eqref{2.0.16}, with $p=\infty$, implies that $f'(u)u_x\sim O(e^{-|x|}(1+|x|)^{-1}(\ln{(1+|x|))^{-2d}})$. Therefore, defining
$${\cal R}(t,x)=-\int_0^t f'(u(\tau,x))u_x(\tau,x)d\tau$$
we see that 
$$\f{e^{x}|{\cal R}(t,x)|}{t}\leq L,$$
for some constant $L>0$, wherefrom we are forced to conclude that (eventually after incorporating possible time dependent terms in $\varepsilon_\pm)$
\bb\label{4.4.8}
{\cal R}(t,x)=t e^{-|x|} R(x),
\ee
for some function $R$ satisfying 
\bb\label{4.4.9}
R(x)\sim O((1+|x|)^{-1}(\ln{(1+|x|))^{-2d}}).
\ee

Taking $\Phi_\pm(\cdot)=\lambda_\pm(\cdot)$ and $\epsilon_\pm=\varepsilon_\pm$ we now only need to show that $\epsilon_\pm(t,x)\rightarrow0$ as $\pm x\rightarrow\infty$.

Let $\varepsilon_+$ given by \eqref{4.4.6}. Then, we have
$$
0\leq \epsilon_+(t,x)=\f{1}{2}\int^{\infty}_xe^{y}(1+e^{2(x-y)})\sigma(t,y)dy\leq\int^{\infty}_x e^{y}\sigma(t,y)dy\rightarrow0,\quad x\rightarrow\infty.
$$

Similarly, we have
$$
0\leq \epsilon_-(t,x)=\f{1}{2}\int_{-\infty}^xe^{-y}(1+e^{2(y-x)})\sigma(t,y)dy\leq\int_{-\infty}^x e^{-y}\sigma(t,y)dy\rightarrow0,\quad x\rightarrow -\infty.
$$

Inequality \eqref{4.4.2} concludes the demonstration. \hfill$\square$

{\bf Proof of theorem \ref{teo2.2}.}

In view of the time translation invariance of the equation, we may assume that $u$ is compactly supported at $t=0$. Our aim is to show that this situation cannot occur for any other further time.

The fact that $u_0$ is compactly supported, tells us that it satisfies \eqref{2.0.6}. Moreover, the conditions \ref{teo2.3} are fulfilled and then \eqref{2.0.7} holds.

Assume that $u_0$ is non-trivial and $u$ is compactly supported at $t=t_1>0$. From \eqref{2.0.8} we have
$$
u(t_1,x)=u_0(x)\pm\, t_1\,e^{-|x|}\Big(\Phi_\pm(t_1)+\epsilon_\pm(t_1,x)+R(x)\Big).
$$

Therefore, taking into account that 
\bb\label{4.4.10}
\Phi_+(t_1)\gg\epsilon_+(t_1,x)+R(x),
\ee
we have
$$
0=\Phi_+(t_1)=\f{1}{2}\int_0^{t_1}e^{y}\sigma(t_1,y)dy,
$$
as long as we take $x$ large enough, that is, $x\gg x_0:=|\max\{\supp{(u_0(\cdot))}\cup\supp{(u(t_1,\cdot))}\}|$.

By construction, $\sigma(t,x)$ is non-negative (see its definition in theorem \ref{teo4.1}) and it vanishes, for $t>0$, if and only if $h_t(x)=0$, $x\in\R$. Again, by theorem \ref{teo4.1}, we conclude that $u\equiv0$, which contradicts the non-triviality of $u$.
\hfill$\square$

%\subsection{An improvement result}\label{subsec4.5}

%Let us take a look at \eqref{2.0.1}. Note that if $u=u(t,x)$ is a solution of that equation, than $v(t,x)=u(t,-x)$ is a solution of
%\bb\label{4.5.1}
%u_t-f'(u)u_x-\p_x\Lambda^{-2}\Big(g(u)+\f{f''(u)}{2}u_x^2\Big)=0.
%\ee

%Moreover, a careful analysis of our demonstrations shows that what is really crucial is that the non-local term is non-negative. Moreover, equation \eqref{4.5.1} can be recovered from \eqref{2.0.1} by replacing $f\mapsto-f$ and $g\mapsto-g$. Therefore, our results would still be true if instead of $f''(x)\geq0$ and $g(x)\geq 0$ in ${\cal H}_1$ and ${\cal H}_2$, we assume $f''(x)\leq0$ and $g(x)\leq 0$. In this case, for consistence, condition \eqref{2.0.5} should be replaced by $|g(u(t,x))|\leq cu(t,x)^2$ (which would again imply the original version again).

\section{Examples and applications}\label{sec5}

Here we apply our results to some concrete models belonging to the class \eqref{1.0.7}.

\subsection{The BBM equation}

Let us consider the BBM equation \cite{bbm}
\bb\label{5.1.1}
u_t-u_{txx}+uu_x=0,
\ee
that can be obtained from \eqref{1.0.7} by taking $f(u)=0$ and $g(u)=u^2/2$. Clearly condition ${\bf H_1}-{\bf H_3}$ are satisfied. As a consequence, if $u\in C^0([0,T];H^s(\R))$ vanishes on an open set, then it necessarily vanishes everywhere, in view of corollary \eqref{cor2.2}. This observation, in fact, is a straightforward consequence of the results reported in \cite{raspa-mo} for the more general class of BBM-type equation
\bb\label{5.1.2}
u_t-u_{txx}+\p_x g(u)=0,
\ee
for a non-negative smooth function $g$ vanishing only at $0$. However, in \cite{raspa-mo} the problem of asymptotic profile of such equation was not considered. If we suppose that $g$ satisfies the condition $g(u)\leq cu^2$, then the corresponding solution of \eqref{5.1.2} emanating from an initial datum satisfying \eqref{2.0.6} have the form \eqref{2.0.8}, which means that it cannot be spatially compactly supported for two different times. In particular, this holds for \eqref{5.1.1}.

\subsection{Dai's equation}

Let us now consider Dai's equation \eqref{1.0.3}. Taking $f(u)=\gamma u^2/2$ and $g(u)=(3-\gamma)u^2/2$ we obtain \eqref{1.0.3} from \eqref{1.0.7} (note that the CH equation corresponds to the case $\gamma=1$). 

Again our conditions on $f$ and $g$ are satisfied, including \eqref{2.0.5}, as long as $\gamma\in[0,3]$. Therefore, by theorem \ref{teo2.1} we conclude that a solution $u\in C^0([0,T];H^s(\R))$, $s>3/2$, emanating from a non-trivial initial data cannot vanish on any open set of its domain, nor can be compactly supported in two different times, in view of theorem \ref{teo2.3}. Moreover, solutions arising from initial data with strong decay (in the sense of \eqref{2.0.6}) will also inherit such property in view of theorem \ref{teo2.3}.

\subsection{The DGH equation}

Let us now consider the DGH equation \cite{dgh} equation
\bb\label{5.3.1}
 u_t-u_{txx}+3uu_x=2u_xu_{xx}+uu_{xxx}+\al u_x+\Gamma u_{xxx}.
\ee

The presence of the linear advection $\al u_x$ may cause a problem at first sight, once it would violate condition \eqref{2.0.5}. However, such a term can be eliminated under the change $u(t,x)=v(t,x+\al t)$. Note that Sobolev spaces are invariant under translations (in the sense that if $f(\cdot)\in H^s(\R)$, then $f(\cdot+x)\in H^s(\R)$). Moreover, the map $(t,x)\mapsto (t,x+\al t)$ is a global diffeomorphism fixing the line $\{0\}\times\R$, meaning that it preserves initial datum, so that $u\in C^0([0,T];H^s(\R))$ is a solution of \eqref{5.3.1} subject to $u(t,x)=u_0(x)$ if and only if $v$ is a solution of the equation
\bb\label{5.3.2}
 v_t-v_{txx}+3vv_x=2v_xv_{xx}+vv_{xxx}+\hat{\Gamma} v_{xxx}
\ee
subject to the same initial condition. Above, $\hat{\Gamma}=\Gamma-\al$. Equation \eqref{5.3.2} is a particular case of \eqref{1.0.7} with $g(u)=u^2$ and $f(u)=u^2+\hat{\Gamma}u$. 

Clearly the functions $f$ and $g$ above satisfy ${\bf H_1}-{\bf H_3}$, and \eqref{2.0.5} as well. Therefore, by theorem \ref{teo2.1} we conclude that the only solution $u\in C^0([0,T];H^s(\R))$ of the DGH equation vanishing on an open set $\Omega\subseteq[0,T]\times\R$ is $u\equiv0$. However, theorems \ref{teo2.2} and \ref{teo2.3} can only be applied whenever $\hat{\Gamma}=0$. If this is so, then \eqref{5.3.2} becomes the CH equation and we re-obtain the results in \cite{bran-imrn}, see also \cite{freire-jpa,freire-cor}.

\subsection{The rCH equation}

Let us now consider equation \ref{1.0.5}. In view of our discussions about the DGH equation, without loss of generality we may assume the form
\bb\label{5.4.1}
 u_t-u_{txx}+3uu_x=2u_xu_{xx}+uu_{xxx}+\be u^2u_x+\gamma u^3u_x+\Gamma u_{xxx},
\ee
that can be obtained from \eqref{1.0.5} by taking
$f(u)=u^2/2+\Gamma u$ and
\bb\label{5.4.2}
g(u)=\Big(1+\f{\be}{3}u+\f{\gamma}{4}u^2\Big)u^2
\ee

Conditions ${\bf H_1}$ and ${\bf H_2}$ are trivially satisfied, but ${\bf H_3}$ is more delicate, since \eqref{5.4.2} is not necessarily non-negative. However, in case $\be^2<9\gamma$, we then have
$$
1+\f{\be}{3}u+\f{\gamma}{4}u^2>\f{9\gamma-\be^2}{9\gamma}>0,
$$
where above we are assuming $\gamma>0$. Moreover, if $u\in C^0([0,T];H^s(\R))$ is a solution of \eqref{5.4.1}, then it is unique since it arises from the initial datum $u_0(x):=u(0,x)$ in view of \cite[Theorem 1.1]{raspajde}. Therefore, in view of Sobolev Embedding Theorem, we have $\|u(t,\cdot)\|_\infty\leq\|u_0\|_{H^1(\R)}$, that, jointly \eqref{5.4.2}, enable us to conclude that
$$
g(u)\leq \Big(1+\f{|\be|}{3}\|u_0\|_{H^1(\R)}+\f{\gamma}{4}\|u_0\|_{H^1(\R)}^2\Big)u^2=:cu^2,
$$
and then, \eqref{2.0.5} is fulfilled.

Likewise the DGH equation, theorems \ref{teo2.2} and \ref{teo2.3} can only be applied if $\Gamma=0$. Therefore, as long as $\Gamma=0$, $\gamma>0$, and $\be^2<9\gamma$, any solution $u\in C^0([0,T];H^s(\R))$, $s>3/2$, of \eqref{5.4.1} vanishing on a non-empty open set $\Omega\subseteq[0,T]\times\R$ must vanish everywhere. Also, if the constraints on the parameters hold and $u$ is a solution of \eqref{5.4.1} with initial condition satisfying \eqref{2.0.6}, then $u$ has the asymptotic profile \eqref{2.0.8}, which prevents the existence of compactly supported solutions for multiple values of time.

\section{Discussion}\label{sec6}

Since Constantin's paper \cite{const-jmp} showed that any solution of the Camassa-Holm equation emanating from a compactly initial data loses such property instantly, many approaches or ways for tackling the same problem have been introduced, see \cite{henry-jnmp,him-cmp, bran-imrn}. All of these works have as a consequence the establishment of persistence properties for the solutions of the Camassa-Holm equation, as well as unique continuation results, see also \cite{linares,freire-jpa,freire-cor}, and have been widely applied, see \cite{linares,freire-cor,henry-jnmp,him-cmp,bran-imrn,freire-jpa,raspa-mo,henri-na,zhou-jmaa,guo-siam} and references therein.

The present paper was mostly concerned with equation \eqref{1.0.7} for basically two reasons:
\begin{itemize}
    \item firstly, to complement the results established in \cite{holden-jde,bran-jde,tian-nach} looking for persistence and unique continuation results for the solutions of the equation;
    \item secondly, some equations, such as \eqref{1.0.5}, have been receiving considerable attention in recent years, but apparently nothing has been done in regard to their persistence properties. In light to these comments, by dealing with such a general model we can better understand the particular equations we studied in section \ref{sec6}.
\end{itemize}

Our theorem \ref{teo2.1} provides a unique continuation result for the solutions of Dai's equation \ref{1.0.3}, while a different proof of persistence properties of its solutions is given when compared with what was done in \cite{guo-siam}. In fact, our main inspiration for proving persistence properties for the solutions of \eqref{1.0.7} is the work by Brandolese, which is based on a general framework for addressing the question using weights in $L^p-$spaces.

We apply our results to some relevant models, such as the BBM, Dai's, DGH and rCH equations, which are models deduced in the study of propagation of waves in water models, with exception of the Dai's equation, which is studied in the context of elastic rods. 

For the Dai's equation our results can only be applied under the restrictions $0\leq\gamma\leq 3$, whereas for the rCH equation the validity of our conclusions can only be assured with the restrictions $\be^2<9\gamma$ and $\Gamma=0$ in the parameters.

The restrictions mentioned in the previous paragraph are originated from the conditions we require on the functions $f$ and $g$. While we already discussed the restriction on $g$ in section \ref{sec2}, nothing has been said about $f'(0)=0$. It is time to fill this gap.

The condition $f'(0)=0$ and the fact that $u$ is bounded imply that $f'$ is locally Lipschitz. In case $f'(0)\neq0$, then \eqref{4.4.8} should be replaced by ${\cal R}(t,x)=te^{|x|/2}R(t,x)$, with $R(t,x)=O((1+|x|)^{-1/2}(\ln{(1+|x|)})^{-d})$. Therefore, \eqref{4.4.9} evaluated at $t=t_1$, would make \eqref{2.0.8} changes to 
$$
u(t_1,x)=u_0(x)\pm t_1 e^{-|x|}(\phi_{\pm}(t_1)+\epsilon_\pm(t_1,x)+e^{|x|/2}R(x)).
$$
As such, \eqref{4.4.10} is no longer true and the result of theorem \ref{teo2.2} is not valid.

The observations above show how important the restrictions on $f$ and $g$ are and, in fact, they are sharp, in the sense that they cannot be relaxed to improve or even obtain the same results (except in the situation described below). In line with fundamental ingredients for our demonstrations, the invariance of the $H^1(\R)-$norm of the solutions is equally essential in the way we proceeded to establish our results, as we can infer from the demonstrations of theorems \ref{teo2.1}--\ref{teo2.3}.

We close this section with the following remark: If we take a careful look at \eqref{2.0.1}, we would then note that if $u=u(t,x)$ is a solution of that equation, then $v(t,x)=u(t,-x)$ would be a solution of
\bb\label{6.0.1}
u_t-f'(u)u_x-\p_x\Lambda^{-2}\Big(g(u)+\f{f''(u)}{2}u_x^2\Big)=0.
\ee

Moreover, a careful analysis of our demonstrations shows that what is really crucial is that the non-local term is non-negative. Moreover, equation \eqref{6.0.1} can be recovered from \eqref{2.0.1} by replacing $f\mapsto-f$ and $g\mapsto-g$. Therefore, our results would still be true if instead of $f''(x)\geq0$ and $g(x)\geq 0$ in ${\cal H}_1$ and ${\cal H}_2$, we assume $f''(x)\leq0$ and $g(x)\leq 0$. In this case, for consistence, condition \eqref{2.0.5} should be replaced by $|g(u(t,x))|\leq cu(t,x)^2$ (which would imply the original version again).

\section{Conclusion}\label{sec7}

In this paper we studied unique continuation and persistence properties of the solutions of the non-linear equation \ref{1.0.7}, and our main results are given in section \ref{sec2}. We then apply our results to some relevant equations describing wave propagation in physical media, mostly of them in shallow water models. 

\section*{Conflict of interest statement}
The author declares that he has no known competing financial interests or personal relationships that could have appeared to influence the work reported in this paper.

\section*{Data availability statement}

The author declares that data sharing is not applicable to this article as no data sets were generated or analysed during
the current study.

\section*{Acknowledgements}

I am grateful to CNPq (grant nº 310074/2021-5) and FAPESP (grant nº 2020/02055-0) for financial support. I am also thankful to the Institute of Advanced Studies of the Loughborough University for warm hospitality and support for my visit. In addition, it is a pleasure to thank the Mathematical Institute of the Silesian University in Opava for the nice environment, where the manuscript was finalised.


\begin{thebibliography}{10}

\bibitem{bbm} T. B. Benjamin, J. L. Bona,  and J. J. Mahony, Model equations for long waves in nonlinear dispersive systems, Phil. Trans. A, vol. 272, 47--78, (1972).


\bibitem{bran-imrn} L. Brandolese, Breakdown for the Camassa-Holm equation using decay criteria and persistence in weighted spaces, IMRN, vol. 22, 5161--5181, (2012).

\bibitem{bran-jde} L. Brandolese and M. F. Cortez, Blowup issues for a class of nonlinear dispersive wave equations, J. Diff. Equ., vol. 256, 3981--3998, (2014).

\bibitem{bran-cmp} L. Brandolese, Local-in-space criteria for blowup in shallow water and dispersive rod equations, Commun. Math. Phys., vol. 330, 401–414, (2014).

\bibitem{bran-jfa} L. Brandolese and M. F. Cortez, On permanent and breaking waves in hyperelastic rods and rings, J. Funct. Anal., vol. 266, 6954--6987, (2014).

\bibitem{chprl} R. Camassa and D.D. Holm, An integrable shallow water equation with peaked solitons, Phys. Rev. Lett., vol. 71, 1661--1664, (1993).


\bibitem{chen-advances} R. M. Chen, G. Gui and Y. Liu, On a shallow-water approximation to the Green--Naghdi equations with the Coriolis effect, Adv. Math., vol. 340, 106--137, (2018).  

\bibitem{co-siam} G. M. Coclite, H. Holden, and K. H. Karlsen, Global weak solutions to a generalized hyperelastic-rod equation, SIAM J. Math. Anal., vol. 37, 1044–1069, (2005).

\bibitem{const1998-1} A. Constantin, J. Escher, Global existence and blow-up for a shallow water equation, Annali Sc. Norm. Sup. Pisa, vol. 26, 303--328, (1998).


\bibitem{const1998-2} A. Constantin and J. Escher, Wave breaking for nonlinear nonlocal shallow water equations, Acta Math., vol. 181, 229--243 (1998).

\bibitem{const1998-3} A. Constantin and J. Escher, Well-Posedness, Global Existence, and Blow up Phenomena, for a Periodic Quasi-Linear Hyperbolic Equation, Commun. Pure App. Math., Vol. LI, 0475--0504 (1998).

\bibitem{const2000-1} A. Constantin, Existence of permanent and breaking waves for a shallow water equation: a geometric approach, Ann. Inst. Fourier, vol. 50, 321--362, (2000).

\bibitem{const-mol}A. Constantin, L. Molinet, The initial value problem for a generalized Boussinesq equation, Differential Integral Equations, vol. 15, 1061--1072, (2002).

\bibitem{const-jmp} A. Constantin, Finite propagation speed for the Camassa--Holm equation, J. Math. Phys.,  vol. 46, article 023506, (2005).

\bibitem{const-arc} A. Constantin and D. Lannes, The Hydrodynamical Relevance of the Camassa–Holm and Degasperis--Procesi Equations, Arch. Rational Mech. Anal., vol. 192, 165--186 (2009). 

\bibitem{raspajde} P. L. da Silva and I. L. Freire, Well-posedness, travelling waves and geometrical aspects of generalizations of the Camassa-Holm equation, J. Diff. Eq., vol. 267, 5318--5369, (2019).

\bibitem{raspasapm} P. L. da Silva and I. L. Freire, Integrability, existence of global solutions, and wave breaking criteria for a generalization of the Camassa–Holm equation, 
Stud. Appl. Math., vol. 145, 537--562, (2020). 

\bibitem{raspa-mo} P. L. da Silva and I. L. Freire, A geometrical demonstration for continuation of solutions of the generalised BBM equation,  Monatsh. Math., vol. 194, 495--502, (2021).

\bibitem{dai-acta} H.H. Dai, Model equations for nonlinear dispersive waves in a compressible Mooney-Rivlin rod, Acta Mech., vol. 127, 193-207, (1998).

\bibitem{dai-wave} H. H. Dai, Exact travelling-wave solutions of an integrable equation arising in hyperelastic rods, Wave Motion, vol. 28, 367--381, (1998).

\bibitem{dai-proc} H. H. Dai and Y. Huo, Solitary shock waves and other travelling waves in a general compressible hyperelastic rod, Proc. R. Soc. Lond. A, vol. 456, 331-363, (2000).

\bibitem{daros} A. Dar\'os, Estabilidade Orbital de Standing Waves, PhD Thesis, UFSCar, (2018).


\bibitem{kelly} A. Dar\'os and L. K. Arruda, On the instability of elliptic travelling wave solutions of the modified Camassa--Holm equation, J. Diff. Equ., vol. 266, 1946--1968, (2018), DOI: 10.1016/j.jde.2018.08.017.



\bibitem{dgh} H. Dullin, G. Gottwald, D. Holm, An integrable shallow water equation with linear and nonlinear dispersion, Phys. Rev. Lett., 87, Article 194501, (2001).

\bibitem{freire-jde-2020-1} I. L. Freire, Wave breaking for shallow water models with time decaying solutions, J. Diff. Equ., vol. 269, 3769–3793, (2020).


\bibitem{freire-jpa} I. L. Freire,  Conserved quantities, continuation and compactly supported solutions of some shallow water models, J. Phys. A: Math. Theor, vol. 54, paper 015207, (2021).

\bibitem{freire-cor} I. L. Freire,  Corrigendum: Conserved quantities, continuation and compactly supported solutions of some shallow water models (2021 J. Phys. A: Math. Theor. 54 015207). Journal of Physics A-Mathematical and Theoretical, v. 54, p. 409502, 2021.

\bibitem{freire-jmp} I. L. Freire, Persistence properties of a Camassa-Holm type equation with $(n+1)-$order non-linearities, J. Math. Phys., vol. 63, paper 011505, (2022).

\bibitem{gui-jmfm} G. Gui, Y. Liu and J. Sun, A nonlocal shallow-water model arising from the full water waves with the Coriolis effect, J. Math. Fluid Mech., vol. 21, article 27, (2019).

\bibitem{gui-jnl} G. Gui, Y. Liu and T. Luo, Model equations and traveling wave solutions for shallow-water waves with the Coriolis effect, J. Nonlin. Sci., vol. 29, 993--1039, (2019).

\bibitem{guo-siam} Z. Guo and Y. Zhou, Wave breaking and persistence properties for the dispersive rod equation, SIAM J. Math. Anal., vol. 40, 2567–2580, (2009).



\bibitem{henry-jnmp} D. Henry, Compactly supported solutions of the Camassa--Holm equation, J. Nonlin. Math. Phys., vol. 12, 342--347, (2005).

\bibitem{henri-na} D. Henry, Persistence properties for a family of nonlinear partial differential equations, Nonlin. Anal., vol. 70, 1565--1573, (2009).



\bibitem{him-cmp} A. A. Himonas, G. Misiolek, G. Ponce and Y. Zhou, Persistence properties and unique continuation of solutions of the Camassa-Holm equation, Commun. Math. Phys., vol. 271, 511-522, (2007).

\bibitem{holden-jde} H. Holden and X. Raynaud, Global conservative solutions of the generalized hyperelastic-rod wave equation, J. Diff. Equ., vol. 233, 448--484, (2007).



\bibitem{linares} F. Linares and G. Ponce, Unique continuation properties for solutions to the Camassa--Holm equation and related models, Proc. Amer. Math. Soc., vol. 148, 3871-3879, (2020).

\bibitem{natali} R. H. Martins and F. Natali, A comment about the paper On the instability of elliptic traveling wave solutions of the modified Camassa-Holm equation, J. Diff. Equ., (2020).

\bibitem{blanco} G. Rodriguez-Blanco, On the Cauchy problem for the Camassa--Holm equation, Nonlinear Anal., 46, 309--327 (2001).

\bibitem{taylor} M. E. Taylor, Partial Differential Equations I, 2nd edition, Springer, (2011).

\bibitem{tian-nach} C. Tian, W. Yan, and H. Zhang, The Cauchy problem for the generalized hyperelastic rod wave equation, Math. Nachr., vol. 287, 2116--2137, (2014).


\bibitem{chines-jde} X. Tu, Y. Liu, C. Mu, Existence and uniqueness of the global conservative weak solutions to the rotation- Camassa--Holm equation, J. Diff. Equ., vol. 266, 4864-4900, (2019).


\bibitem{zhou-jmaa} S. Zhou, Persistence properties for a generalized Camassa--Holm equation in weighted $L^p$ spaces, J. Math. Anal. Appl., vol. 410, 932--938, (2014).




\end{thebibliography}
\end{document}